\documentclass[12pt]{amsart}
\usepackage{geometry}                
\geometry{letterpaper}                   
\usepackage{graphicx, color}
\usepackage{amssymb, amsmath}
\usepackage{epstopdf}
\DeclareGraphicsRule{.tif}{png}{.png}{`convert #1 `dirname #1`/`basename #1 .tif`.png}

\newtheorem{theorem}{Theorem}[section]
\newtheorem{lemma}[theorem]{Lemma}
\newtheorem{corollary}[theorem]{Corollary}
\newtheorem{proposition}[theorem]{Proposition}

\newtheorem{definition}[theorem]{Definition}
\newtheorem*{uthm}{Theorem}

\newcommand{\Z}{\mathbb{Z}}

\newcommand{\C}{\mathbb{C}}
\newcommand{\D}{\mathbb{D}}

\newcommand{\acts}{\curvearrowright}

\title{Measure free factors of free groups}
\author{Juan Alonso}

\begin{document}
\maketitle

\begin{abstract} Measure free factors are a generalization of the notion of free factors of a group, in a measure theoretic context. We find new families of cyclic measure free factors of free groups and some virtually free groups, following a question by D. Gaboriau. 
\end{abstract}

\section{Introduction}

The notion of {\em measure equivalence} between groups was introduced by M. Gromov \cite{gromov}, as an analog of quasi-isometry in the context of actions on measure spaces. It has been widely studied since then, for example the works of Dye \cite{dye1}, \cite{dye2}, Ornstein and Weiss \cite{ow}, and Furman \cite{furman} characterized the measure class of $\Z$, found to be exactly the infinite amenable groups. This motivates the study of which groups are measure equivalent to the non-abelian free groups, $F_n$ with $n\geq 2$ (which are all virtually isomorphic, thus measure equivalent). 

In this paper we will work with the notion of {\em treeability}, in the sense of Pemantle and Peres \cite{pp}, which is equivalent to being measure equivalent to a free group, as shown by G. Hjorth \cite{hjorth}. 

The concept of {\em measure free factor} (definition \ref{mffdef}) was introduced by D. Gaboriau in \cite{gab-mff}, as a tool for the study of measure equivalence. He was able to find many groups that are measure equivalent to the free group $F_2$, by showing that this class is closed under certain amalgamated products. Namely, those amalgamations $A*_CB$ where $C$ is a measure free factor of either $A$ or $B$ (see \ref{amalgam} for the precise statement). In this work, Gaboriau \cite{gab-mff} poses the question of which elements of $F_2$ generate a cyclic measure free factor. He shows that such an element cannot be a proper power, and also finds the first non-trivial example (see \ref{surface1}). 

Gaboriau's results were then used by M. Bridson, M. Tweedale and H. Wilton \cite{bridson} to prove that hyperbolic {\em limit groups} (defined by Z. Sela \cite{sel1}) are measure equivalent to $F_2$. This provides a wide set of examples of groups in the measure equivelence class of $F_2$, and it naturally raises the question of whether all non-abelian limit groups are in this class. The importance of measure free factors in the study of this problem is explained in that same work \cite{bridson}, and it arises from the structure of limit groups as iterated amalgamations and HNN extensions, found by Z. Sela \cite{sel1} and O. Kharlampovich and A. Myasnikov \cite{km1}, \cite{km2}. Thus, new measure free factors of free or limit groups give rise to more limit groups in the measure class of $F_2$. 

This work advances the study of measure free factors of free groups, by finding some new infinite families of elements of $F_n$ that generate cyclic measure free factors of $F_n$. Namely, we prove the following.

\begin{uthm} {\rm \bf \ref{bswords}} Let $F=\langle x,y_1,\ldots,y_k \rangle$ be a free group of rank $k+1$. Then an element of the form \[ w=xy_1x^{m_1}y_1^{-1}y_2x^{m_2}y_2^{-1}\cdots y_kx^{m_k}y_k^{-1} \] generates a measure free factor of $F$.
\end{uthm}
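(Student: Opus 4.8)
The plan is to reduce the statement, by induction on $k$, to a single lemma patterned on Gaboriau's example \ref{surface1}, which is precisely the case $k=1$, $m_1=-1$ of the theorem. Write $F_j=\langle x,y_1,\dots,y_j\rangle$, so that $F_j=F_{j-1}*\langle y_j\rangle$, and set $w_0=x$ and $w_j=w_{j-1}\,y_jx^{m_j}y_j^{-1}$, so that $w_k=w$. Underlying this is the algebraic fact that makes $w$ tractable: $w$ lies in $N=\langle\langle x\rangle\rangle$, the normal closure of $x$; the quotient $F/N$ is free on the images of the $y_i$; and $N$ is free on the Schreier generators $x_t=txt^{-1}$ indexed by the reduced words $t$ in $y_1,\dots,y_k$. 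In that basis $w=x_1\,x_{y_1}^{m_1}\cdots x_{y_k}^{m_k}$ is a product of powers of \emph{distinct} basis elements, one exponent being $1$, hence $w$ is a primitive element of $N$. So although $w$ is usually not primitive in $F$, it is ``free-factor-like'' relative to a normal subgroup with free quotient, and the induction above is the device to exploit that.

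The heart of the argument is the following lemma, to be proved on its own: \emph{if $\langle a\rangle$ is a measure free factor of a group $A$ and $c\in A$ is arbitrary, then $\langle a\,ycy^{-1}\rangle$ is a measure free factor of $A*\langle y\rangle$.} Granting it, the theorem follows: $\langle w_0\rangle=\langle x\rangle$ is trivially a measure free factor of $F_0=\langle x\rangle$, and if $\langle w_{j-1}\rangle$ is one of $F_{j-1}$ then, applying the lemma with $A=F_{j-1}$, $a=w_{j-1}$, $c=x^{m_j}\in F_{j-1}$ and $y=y_j$, we get that $\langle w_j\rangle=\langle w_{j-1}\,y_jx^{m_j}y_j^{-1}\rangle$ is a measure free factor of $F_j=F_{j-1}*\langle y_j\rangle$; after $k$ steps this is the assertion for $w=w_k$. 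Note that $c=x^{m_j}$ is an arbitrary power, so the lemma cannot require $\langle c\rangle$ itself to be a measure free factor, consistently with Gaboriau's observation that proper powers never are.

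To prove the lemma I would argue with treeings. Fix a free p.m.p.\ action of $G=A*\langle y\rangle$ on $(X,\mu)$; restricting to $A$ and using \ref{mffdef}, write $\mathcal R_A=\mathcal R_{\langle a\rangle}*\mathcal R'$ and fix a treeing $\{a\}\sqcup\Phi'$ realizing it, and since $G$ acts freely and $G=A*\langle y\rangle$ one has $\mathcal R_G=\mathcal R_A*\mathcal R_{\langle y\rangle}$ with treeing $\{a\}\sqcup\Phi'\sqcup\{y\}$. The idea is then to re-tree $\mathcal R_G$ so as to replace the $\langle a\rangle$-orbits by the $\langle v\rangle$-orbits, $v=aycy^{-1}$: the $a$-edge, the $y$-edge and the ($A$-internal, hence $\{a\}\sqcup\Phi'$-expressible) $c$-action are recombined, measurably in the base point, into a single edge carrying the $v$-action together with a compensating edge, and one checks that (i) the new graphing still generates $\mathcal R_G$, (ii) it is acyclic, and (iii) the $\langle v\rangle$-part splits off as a free factor, i.e.\ $\mathcal R_G=\mathcal R_{\langle v\rangle}*\mathcal R''$. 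The main obstacle is (ii)--(iii). Since $v$ is generally not primitive in $G$, the replacement edges are honest partial isomorphisms varying measurably with the point, not fixed group elements, so acyclicity is not a relation count; it must be verified from the normal form in the Bass--Serre tree of $G=A*\langle y\rangle$, where $v$ is cyclically reduced of syllable length $4$, together with the interaction of the $\langle a\rangle$-edge with $y$ and with the $c$-subword --- which is exactly the bookkeeping underlying the proof of \ref{surface1}. I would structure this by first disposing of the degenerate cases ($c=1$, where $v=a$ and there is nothing to do) and a model case, and then reducing the general case to these via the free-product structure of $A*\langle y\rangle$ and the amalgamation behaviour of measure free factors recorded in \ref{amalgam}.
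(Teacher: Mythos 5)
Your induction scheme is sound as a reduction: granting your lemma (``if $\langle a\rangle$ is a measure free factor of $A$ and $c\in A$ is arbitrary, then $\langle a\,ycy^{-1}\rangle$ is a measure free factor of $A*\langle y\rangle$''), the steps $A=F_{j-1}$, $a=w_{j-1}$, $c=x^{m_j}$, $y=y_j$ do deliver the theorem. But the lemma is where all of the content lives, and you do not prove it: what you give is a description of a strategy (``re-tree, recombine the edges, one checks (i)--(iii)'') together with the admission that the main obstacle is exactly (ii)--(iii). This is a genuine gap, not a deferred verification. The difficulty is concrete. In the application $c=x^{m_j}$ is in general a proper power, so $\langle c\rangle$ is \emph{not} a measure free factor of $A$ (Gaboriau's observation, which you yourself cite), and a fortiori $\langle c\rangle$ cannot be made a common measure free factor alongside $\langle a\rangle$ and the new letter. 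Consequently the sub-relation carried by the ``$c$-edge'' you want to splice into the new generator does not split off from $E_A^X$, and the acyclicity of the recombined graphing cannot be certified by a normal-form argument in the Bass--Serre tree of $A*\langle y\rangle$: the cycles one must rule out pass through the internal structure of the orbits of $c$ inside $E_A^X$, about which the hypothesis ``$\langle a\rangle$ is a measure free factor of $A$'' says nothing. This is precisely the obstruction that forces the paper's proof through a different route: one realizes $F$ as an HNN extension of a punctured-sphere group carrying $x,c_1,\dots,c_k,w$ as boundary curves, passes to a finite-index subgroup via Theorem \ref{lift} so that the relevant boundary classes $\hat x,\hat c_j,\hat w$ freely generate the boundary subgroup of a \emph{positive-genus} surface (a measure free factor by Lemma \ref{surface3}), and only then applies the HNN criterion, Proposition \ref{HNN}, taking $H'=\langle\hat x\rangle$ (not the proper-power subgroup $\langle\hat x^{m_j}\rangle$) as the common measure free factor containing the image of $\alpha$. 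Some device of this kind --- or a strengthened induction hypothesis asserting that $\langle x\rangle$ and $\langle w_j\rangle$ are \emph{common} measure free factors of $F_j$, which would itself be a new unproved statement --- is unavoidable; your proposal supplies neither.

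Two smaller points. Your lemma is strictly stronger than the theorem (arbitrary $A$ and arbitrary $c\in A$), so you are attempting to prove more than is needed by a method that is only sketched; nothing in the paper establishes it at that generality. And the observation that $w$ is primitive in the normal closure $\langle\langle x\rangle\rangle$ is correct but plays no role in any argument here: that subgroup has infinite index in $F$, so no lifting or finite-cover statement applies to it.
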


\begin{uthm} {\rm \bf \ref{twoletters}} Let $G = F_2 = \langle a,b \rangle$. Then an element of the form $w=a^kb^n$ for $k,m \neq 0$ generates a measure free factor of $G$.
\end{uthm}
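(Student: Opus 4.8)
The plan is to sidestep any direct treeing construction and instead realize $\langle w\rangle$, with $w=a^{k}b^{n}$, as a genuine free factor of a finite-index subgroup of $G$, and then transport the measure free factor property up to $G$ along that inclusion. Up to the automorphisms $a\mapsto a^{-1}$ and $b\mapsto b^{-1}$ (under which the assertion is clearly invariant) we may assume $k,n>0$; and if $\gcd(k,n)=1$ then $w$ is already a primitive element of $G$, so $\langle w\rangle$ is a free factor, hence by Definition \ref{mffdef} a measure free factor, of $G$. The argument below works uniformly, so no real case split is needed.

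First I would exhibit the subgroup. Let $\phi\colon G\to\Z/n$ be the homomorphism with $\phi(a)=0$ and $\phi(b)=1$, and put $K=\ker\phi$, a normal subgroup of index $n$. With respect to the Schreier transversal $\{1,b,\dots,b^{n-1}\}$, the Reidemeister--Schreier rewriting shows that $K$ is free of rank $n+1$ on the basis $\{\,b^{n}\,\}\cup\{\,b^{i}ab^{-i}:0\le i\le n-1\,\}$. Since $\phi(w)=k\cdot 0+n\cdot 1=0$ we have $w\in K$, and in this basis $w=a^{k}b^{n}$ is simply the $k$-th power of the basis element $a$ times the basis element $b^{n}$, a freely reduced word of length $k+1$. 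The elementary Nielsen transformation replacing $b^{n}$ by $a^{k}b^{n}=w$ therefore carries this basis to a new free basis of $K$ one of whose members is $w$, so $\langle w\rangle$ is a free factor of $K$. By the easy direction of the theory --- a free factor of a group is a measure free factor of it, cf. \cite{gab-mff} and Definition \ref{mffdef} --- it follows that $\langle w\rangle$ is a measure free factor of $K$. (In particular the elements $w$ here are \emph{virtual} free factors of $F_{2}$, so the present proof asks for less than the treeing constructions behind Theorem \ref{bswords}.)

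The remaining step --- passing from $K$ to $G$ --- is the one I expect to be the main obstacle: I would prove that if $K$ is a finite-index subgroup of $G$ and $C\le K$ is a measure free factor of $K$, then $C$ is a measure free factor of $G$. Given a free p.m.p.\ action $G\acts(Z,\mu)$, restrict it to $K$; the hypothesis supplies a treeing $\Phi$ of the orbit relation $R_{K}$ in which a subtreeing generates $R_{C}$ while the complementary subrelation is treeable. Each $R_{G}$-class is a union of $n=[G:K]$ classes of $R_{K}$, so $R_{G}$ is obtained from $R_{K}$ by adjoining a finite subrelation $\mathcal F$ whose classes have size $n$; selecting measurably a spanning forest of $\mathcal F$ and adding its edges to $\Phi$ yields a treeing of $R_{G}$. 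Since $C$ still lies in the $R_{K}$-part, with its complementary subrelation only enlarged by a treeable finite relation, $C$ is a measure free factor of $G$; taking $C=\langle w\rangle$ and $K$ as above completes the proof. What needs care is the measurable choice of the spanning forest of $\mathcal F$, the verification that the enlarged graph remains a forest, and the check that the free-factor-type splitting of the equivalence relation survives adjoining the finite part --- alternatively this last step can be extracted from Gaboriau's amalgamation result \ref{amalgam}, or cited directly as a stability property of measure free factors from \cite{gab-mff}.
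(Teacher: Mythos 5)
Your first step is correct: with $K=\ker\phi$ for $\phi(a)=0$, $\phi(b)=1$, the Reidemeister--Schreier basis $\{b^n\}\cup\{b^iab^{-i}\}$ is as you say, and the Nielsen move does exhibit $w=a^kb^n$ as a genuine free factor of the index-$n$ subgroup $K$. (A side remark: your claim that $\gcd(k,n)=1$ makes $w$ primitive in $F_2$ is false --- $a^2b^3$ is not primitive --- but you do not rely on it.) The fatal problem is the transfer principle you isolate as the ``remaining step'': it is simply \emph{false} that a measure free factor of a finite-index subgroup is a measure free factor of the ambient group. Counterexample: $a^2$ is a free factor of the index-$2$ subgroup $\langle a^2, b, aba^{-1}\rangle$ of $F_2$ (same Reidemeister--Schreier computation), hence a measure free factor of it; but $a^2$ is a proper power, and Gaboriau showed (as recalled in Section \ref{sec:mff}) that a proper power never generates a measure free factor of a free group. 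So no amount of care about measurable spanning forests can rescue that step. Indeed your specific treeing argument collapses on cost grounds: for a free p.m.p.\ action of $G=F_2$ on a probability space one has $C(R_G)=2$ while $C(R_K)=\mathrm{rank}(K)=n+1$, so a treeing of $R_K$ already costs strictly more than any treeing of $R_G$ (for $n\geq 2$), and \emph{adding} edges to it can only create cycles, never produce a treeing of $R_G$.

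The correct descent from a finite-index subgroup is the paper's Theorem \ref{lift}, whose hypotheses are much stronger than what you verify: one must take a \emph{complete lift} of $w$ to $K$, i.e.\ the elements $g_i^{-1}w^{m_i}g_i$ over all double cosets in $\langle w\rangle\backslash G/K$ --- since your $K$ is normal and contains $w$, this means all $n$ conjugates $b^{-i}wb^{i}$, $i=0,\dots,n-1$ --- and show that these generate a free subgroup of rank $n$ which is a measure free factor of $K$. Proving that $w$ alone is a free factor of $K$ establishes only one of the $n$ required generators. Moreover, in your basis those $n$ conjugates read $a_0^kB$ and $B^{-1}a_i^kB^2$, which still involve the $k$-th powers $a_i^k$, so the difficulty has not actually been reduced. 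This is precisely why the paper's proof passes to a cover of index $kn$ (not $n$), identifies the full complete lift $w_0,\dots,w_{n-1}$, rewrites each $w_t$ in a suitable basis as a word of the type handled by Theorem \ref{bswords}, and only then applies Theorem \ref{lift}.
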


Since conjugates of measure free factors are also measure free factors, Theorem \ref{twoletters} gives that all the words of the form $a^kb^na^p$ with $n\neq 0, k\neq -p$ generate measure free factors of $F_2$. These are exactly all the {\em three-letter words} of $F_2$ that are not proper powers. In the special case of $F_2$, Theorem \ref{bswords} says that the words of the form $aba^mb^{-1}$ generate measure free factors of $F_2$.

This produces new examples of groups that are measure equivalent to a free group, for instance the limit groups $F_n*_wF_n$ and $F_n*_w\left(\langle w\rangle\times \Z^m\right)$, where $w$ is one of the elements mentioned in Theorems \ref{bswords} or \ref{twoletters}.

We also find measure free factors of some virtually free groups.

\begin{uthm} {\rm \bf \ref{vfree}} Let $G = \langle a_1,\ldots,a_n,s_1,\ldots,s_k | s_1^{n_1}=1,\ldots, s_k^{n_k}=1 \rangle \cong F_n*\Z_{n_1}*\cdots *\Z_{n_k}$. If $v\in F_n$ generates a measure free factor of $F_n$, then $w=vs_1^{p_1}\cdots s_k^{p_k}$ generates a measure free factor of $G$ for any $p_1,\ldots,p_k$.
\end{uthm}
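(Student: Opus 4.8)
The plan is not to analyze $w$ inside $G$ directly, but to pass to an intermediate subgroup in which $\langle w\rangle$ is an \emph{honest} free factor. Put $H=\langle v,s_1,\dots,s_k\rangle\le G$. Since $H$ is generated by the subgroup $\langle v\rangle$ of the free factor $F_n$ together with the remaining free factors, the standard manipulation $X*Y\cong X*_{X'}(X'*Y)$ for $X'\le X$ applies with $X=F_n$, $X'=\langle v\rangle$, $Y=\Z_{n_1}*\cdots*\Z_{n_k}$, giving $H\cong\langle v\rangle*\Z_{n_1}*\cdots*\Z_{n_k}$ and
\[ G\ \cong\ F_n *_{\langle v\rangle} H . \]
Now $w=v\,s_1^{p_1}\cdots s_k^{p_k}\in H$, and the assignment $v\mapsto w$, $s_i\mapsto s_i$ extends to an automorphism $\phi$ of $H$: it respects the relations $s_i^{n_i}=1$, and $v\mapsto v\,s_k^{-p_k}\cdots s_1^{-p_1}$, $s_i\mapsto s_i$ is a two-sided inverse for it. As $\langle v\rangle$ is visibly a free factor of $H$, so is its image $\langle w\rangle=\phi(\langle v\rangle)$; in particular $\langle w\rangle$ is a measure free factor of $H$, since honest free factors are measure free factors (immediate from Definition \ref{mffdef}).

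It now suffices to prove two things: that $H$ is a measure free factor of $G$, and that the measure-free-factor relation is transitive, so that a measure free factor of $H$ is again one of $G$. For the first, I would feed the decomposition $G\cong F_n*_{\langle v\rangle}H$ and the hypothesis ``$\langle v\rangle$ is a measure free factor of $F_n$'' into the construction behind \ref{amalgam}. The point is that that construction does more than treeify $G$: it produces, for a suitable free p.m.p.\ action of $G$, a splitting $R_G=R_H * R'$ of the orbit equivalence relation as a free product of measured equivalence relations with $R'$ treeable — the treeable piece being the treeable complement of $\langle v\rangle$ inside the $F_n$--relation together with the Bass--Serre edge relation. So I would isolate this relative form of \ref{amalgam}: \emph{if $C$ is a measure free factor of $A$, then $B$ is a measure free factor of $A*_C B$}; applied with $A=F_n$, $C=\langle v\rangle$, $B=H$ it gives that $H$ is a measure free factor of $G$. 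Combined with transitivity, the chain ``$\langle w\rangle$ is a measure free factor of $H$; $H$ is a measure free factor of $G$'' proves the theorem.

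The main obstacle is transitivity, together with the related nuisance that Definition \ref{mffdef} asserts only the \emph{existence} of a witnessing action, so the actions witnessing ``$D$ is a measure free factor of $B$'' and ``$B$ is a measure free factor of $\Gamma$'' need not match up. To handle $D\le B\le\Gamma$ with $R_\Gamma=R_B*R_1$ and $R_B=R_D*R_2$ ($R_1,R_2$ treeable, for appropriate actions), one first forces the two $B$--actions to agree — this should be harmless, since the measure-free-factor property is unaffected by replacing an action by a stably orbit equivalent one or by a product with an auxiliary free action — and then composes the two splittings, using associativity of the free product of measured equivalence relations, into $R_\Gamma=R_D*(R_2*R_1)$; a free product of treeable relations is treeable by Gaboriau, so $R_2*R_1$ is treeable and $D$ is a measure free factor of $\Gamma$. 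The bookkeeping required to glue the graphings of $R_1$, $R_2$ and the Bass--Serre relations into one treeing with no spurious relations — i.e.\ to check that the combined relation genuinely is a free product — is where the work concentrates; the automorphism trick and the amalgam decomposition of $G$ are then formal.
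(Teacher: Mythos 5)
Your route is genuinely different from the paper's and, once streamlined, is simpler. The paper proves Theorem \ref{vfree} by induction on $k$: it passes to the index-$n_k$ subgroup generated by the $s_k$-conjugates of $K=\langle a_1,\ldots,a_n,s_1,\ldots,s_{k-1}\rangle$ (realized as a branched cover of the presentation complex), computes a complete lift $w_0,\ldots,w_{d-1}$ of $w$, shows via the inductive hypothesis and Lemma \ref{freeprod} that this lift freely generates a measure free factor there, and concludes with the finite-index lifting theorem \ref{lift}. You instead rewrite $G\cong F_n*_{\langle v\rangle}H$ with $H=\langle v\rangle*\Z_{n_1}*\cdots*\Z_{n_k}$ and observe that $\langle w\rangle$ is an honest free factor of $H$ via the automorphism $v\mapsto vs_1^{p_1}\cdots s_k^{p_k}$. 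Both the amalgam decomposition and the automorphism claim are correct (for $v\neq 1$; if $v=1$ the automorphism trick breaks down, but the paper's induction also silently needs the complete lift to generate a free group of the right rank, so that degenerate case is not really handled by either argument). Your approach eliminates the induction, the covering-space computation, and Theorem \ref{lift} entirely.

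The one soft spot is the final step: you route the conclusion through ``$H$ is a measure free factor of $G$'' plus general transitivity of the measure-free-factor relation. Transitivity is nowhere established in the paper, and as you concede it requires real work (matching up the two witnessing actions via products and coinduction). But your own setup makes it unnecessary: since $\langle w\rangle$ is an honest free factor of $\Gamma_2:=H$, it is in particular a measure free factor of $\Gamma_2$, so Theorem \ref{amalgam} applied to $G=\Gamma_1*_{\Lambda}\Gamma_2$ with $\Gamma_1=F_n$, $\Lambda=\langle v\rangle$ (a measure free factor of $\Gamma_1$ by hypothesis) and the subgroup $\langle w\rangle\leq\Gamma_2$ directly yields that $\langle w\rangle$ is a measure free factor of $G$; you only need to note that $\Gamma_2$ is treeable, which holds because it is a free product of treeable groups. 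Alternatively, the only instance of transitivity you actually use --- an honest free factor $D$ of a measure free factor $B$ of $\Gamma$ is again a measure free factor of $\Gamma$ --- is immediate from Definition \ref{mffdef}: the same free action witnessing $E_{\Gamma}=E_B*E'$ also splits $E_B=E_D*E_L$ whenever $B=D*L$. (The paper uses exactly this observation inside its own proof.) With that repair, your argument is complete.
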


The measure free factors obtained by Gaboriau \cite{gab-mff} are the boundary subgroups of certain surface groups (orientable with positive genus, see \ref{surface1}). Theorem \ref{vfree} allows us to generalize this to boundary subgroups of some 2-orbifold groups (with positive genus, see Corollary \ref{orbifold}).

The paper is organized as follows. Section \ref{sec:prelim} covers the preliminary notions and known results. First we give a brief introduction to Borel equivalence relations, treeings and cost, in subsection \ref{sec:borel}. Then, in subsection \ref{sec:mff}, we recall the definitions and results of Gaboriau \cite{gab-mff} that concern measure free factors. We also define {\em common measure free factors} (definition \ref{commonmff}), which are necessary for Proposition \ref{HNN}, a version for HNN extensions of Gaboriau's theorem about amalgamations (Theorem \ref{amalgam}). Subsection \ref{sec:ind} recalls the notions of induced and coinduced actions, and contains the proof of Proposition \ref{HNN}. Section \ref{sec:lift} is devoted to the main technical tool, Theorem \ref{lift}, which gives a way of passing to finite index subgroups in the problem of showing that some cyclic subgroup is a measure free factor. This result is very related to the Kurosh theorem for Borel equivalence relations obtained by A. Alvarez \cite{aurelien}, but going in the converse direction. Theorem \ref{lift} is then used in section \ref{sec:surfaces} to generalize the theorems of Gaboriau about the boundary subgroup of surface groups (\ref{surface1} and \ref{surface2}) to non-orientable surfaces (Lemma \ref{surface3}). We also prove a version for slightly more general systems of disjoint simple closed curves instead of the boundary subgroup (Proposition \ref{surface4}). Finally, section \ref{sec:mff-new} contains the proofs of Theorems \ref{bswords} and \ref{twoletters}. Section \ref{sec:mff-vfree} gives the proof of Theorem \ref{vfree}, as well as its corollary \ref{orbifold} which generalizes Proposition \ref{surface4} to the case of 2-orbifolds.     

I would like to thank D. Gaboriau for useful discussion and suggestions.

\section{Preliminaries}\label{sec:prelim}

\subsection{Treeings and cost}\label{sec:borel}


Here we review some of the theory of {\em Borel equivalence relations} on {\em standard Borel spaces}, and the notion of {\em cost}, an invariant of an equivalence relation preserving a probability measure. This material is covered with detail in the book by Kechris and Miller \cite{kechris}.

A {\em standard Borel space} is a measurable space $X$ (a set $X$, together with a $\sigma$-algebra of subsets of $X$), which is isomorphic as a measurable space to a Borel subset of the interval (with the $\sigma$-algebra of Borel sets). When $X$ is a standard Borel space, we will refer to the subsets in its $\sigma$-algebra as the Borel subsets of $X$. We will also call an isomorphism of measurable spaces between standard Borel spaces  a  {\em Borel isomorphism}.

It is a well known result that $X$ is a standard Borel space if and only if it is isomorphic to a Borel subset of any Polish space (see for instance \cite{kechris-classic}). Thus products of standard Borel spaces are again standard Borel spaces.

A {\em Borel equivalence relation} on the standard Borel space $X$ is an equivalence relation $E\subset X\times X$ which is a Borel subset of $X\times X$.

Borel equivalence relations are one of the basic objects of this work. The main example we will consider is when a group $G$ acts on $X$ by Borel automorphisms. Then the equivalence relation $E_G^X$, whose classes are the $G$-orbits, is Borel. It is a fact that every Borel equivalence relation with countable classes comes from the action of some group (Feldman--Moore Theorem, \cite[Theorem 15.1]{kechris} ), but we will not need to make use of this.


Now we turn to {\em graphings}, which play the role of generators for these equivalence relations.
 
Let $X$ be a standard Borel space. A {\em partially defined isomorphism} on $X$ is a map $\varphi:A\to B$, where $A$ and $B$ are Borel subsets of $X$ and $\varphi$ is a Borel isomorphism between them. If $\Phi = \{\varphi_i:A_i \to B_i \}_{i\in I}$ is a family of partial isomorphisms, the equivalence relation {\em generated} by $\Phi$ is the minimal Borel equivalence relation $E\subset X\times X$ containing the set  \[ \{(x,\varphi_i(x)): x\in A_i, i\in I \} \]
we will denote it by $E_{\Phi}$.

\begin{definition} A {\em graphing} for $E$ is a family $\Phi = \{\varphi_i \}_{i\in I}$ of partially defined Borel isomorphisms on $X$ that generates $E$ 
\end{definition} 

As the name suggests, a graphing defines a graph structure on each equivalence class. Consider the graph with vertex set $X$ and edges $(x,\varphi_i(x))$ for $x\in A_i$, $i\in I$. Then its connected components are the equivalence classes of $E$, and this gives the graph structure of each class. 

As an example, let $E=E_G^X$ be the orbit relation induced by an action $G \acts X$. If $S=\{ s_i\}_{i\in I}$ is a generating set for $G$, then the maps $\varphi_i:X\to X$ s.t. $\varphi_i(x) = s_i\cdot x$ form a graphing for $E_G^X$. In this case $A_i=B_i=X$ for all $i$. The graph structure is the same for every orbit, and agrees with the Cayley graph of $(G,S)$.


Now let $\mu$ be a finite measure on the standard Borel space $X$. We say that the equivalence relation $E$ {\em preserves} $\mu$, or is {\em measure preserving} (m.p.), if it admits a graphing $\Phi = \{\varphi_i:A_i \to B_i \}$ in which every transformation $\varphi_i$ preserves $\mu$, i.e. $(\varphi_i)_* \mu|_{A_i} = \mu|_{B_i}$. It is easy to check that {\em every} graphing of a m.p. equivalence relation satisfies that property.

If $G \acts X$ is a Borel action, the orbit relation $E_G^X$ preserves $\mu$ if and only if $G$ acts by measure preserving Borel automorphisms of $X$. In this case we say that the action is m.p.. When $\mu$ is a probability measure, we call it {\em probability measure preserving} (p.m.p.).

In the context of m.p. Borel actions, we want to relax the definition of a free action, to mean free almost everywhere.

\begin{definition} A m.p. Borel action $G\acts X$ is called {\em free} if the set of $x\in X$ such that $g\cdot x= x$ for a non-trivial $g\in G$ has measure zero.
\end{definition}

The equivalence relations we will consider in the rest of the paper will be the ones induced by free p.m.p. Borel actions, and their sub-relations.

For a m.p. equivalence relation, we can define its {\em cost}, an invariant introduced by G. Levitt \cite{levitt}, and studied extensively by D. Gaboriau \cite{gab-mercuriale}, \cite{gab-cout}. In the analogy between graphings and group generators, the cost would correspond to the rank.

\begin{definition} Let $X$ be a standard Borel space with a finite measure $\mu$. 
\begin{enumerate}
\item Let $\Phi = \{\varphi_i:A_i \to B_i \}$ be a measure preserving graphing. The {\em cost} of $\Phi$ is \[C_{\mu}(\Phi) = \sum_i \mu(A_i) \] 
\item Let $E$ be a Borel m.p. equivalence relation on $X$. The cost of $E$ is the infimum of the costs of its graphings, i.e.
\[ C_{\mu}(E) = \inf \{ C(\Phi) : \Phi \mbox{ is a graphing for } E \} \]
\end{enumerate}
\end{definition}

It is clear that $C_{\lambda\mu} = \lambda C_{\mu}$ both for graphings and equivalence relations. When $\mu$ is a probability, we drop it from the notation, writing $C$ for $C_{\mu}$.  

The cost also provides an invariant for groups.

\begin{definition} The {\em cost} of a group $G$ is the infimum of the costs $C(E_G^X)$ over all free Borel p.m.p. actions $G\acts X$ on standard Borel probability spaces.
\end{definition}

The most important kind of graphings are {\em treeings}, which we define now. Treeings are the analog of free bases for a free group, in the context of m.p. Borel equivalence relations.

\begin{definition} Let $E$ be a m.p. Borel equivalence, and $\Phi$ be a graphing for $E$. We say that $\Phi$ is a {\em treeing} if the graph induced by $\Phi$ on each class of $E$ is a tree for almost every class.
\end{definition}

Not every m.p. Borel equivalence relation admits a treeing. If it does, it is called {\em treeable}. As an example, consider the relation $E_G^X$ induced by a free m.p. Borel action $G\acts X$, and the graphing $\Phi$ given by a generating set $S$ of $G$. In this case, $\Phi$ is a treeing if and only if $G$ is a free group and $S$ is a free basis. If $G$ is not free, it is still possible that $E_G^X$ may admit a treeing for some free m.p. Borel action. However, to determine which groups do so is an open problem. 

\begin{definition} A group $G$ is {\em treeable} if there exists a free p.m.p. Borel action $G\acts X$ such that $E_G^X$ is treeable. 
\end{definition}

The following theorems, due to Gaboriau, are the fundamental results in this theory which we will employ throughout this paper.


\begin{theorem}\label{subtree} \cite[Theorem 5]{gab-cout} Let $E$ be a m.p. Borel equivalence relation with countable classes. If $E$ is treeable and $F\subset E$ is a sub-equivalence relation, then $F$ is also treeable.
\end{theorem}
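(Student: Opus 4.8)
The plan is to take a treeing $\Phi$ of $E$, use the tree structure it induces on each $E$-class to manufacture a spanning tree of each $F$-class, and then check that the resulting graph on $X$ is a Borel graphing of $F$ --- a treeing by construction, and measure preserving for free. After discarding a $\mu$-null $E$-invariant set I may assume $\Phi$ makes \emph{every} $E$-class into a genuine tree $T$; by the Feldman--Moore and Lusin--Novikov theorems I may also fix Borel maps $f_0=\mathrm{id},f_1,f_2,\dots$ with $[x]_F=\{f_n(x):n\geq 0\}$. Inside a fixed $E$-class $T$, the $F$-classes partition the vertex set, and the whole construction will be carried out in parallel over all such $T$.

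Fix an $F$-class $C\subseteq V(T)$ and let $T_C\subseteq T$ be its convex hull, i.e. the union of all $T$-geodesics between points of $C$; one checks easily that every leaf of $T_C$ lies in $C$. Declaring $x,y\in C$ adjacent whenever the geodesic $[x,y]$ in $T$ meets $C$ only at its endpoints gives a \emph{connected} graph on $C$ (walk along a geodesic between two points and read off the $C$-vertices it passes through in order), but this graph can have cycles, precisely when $T_C$ has a branch vertex outside $C$ --- already a tripod with its centre outside $C$ produces a triangle. To remedy this I collapse each connected component $W$ of the forest induced by $T_C$ on $V(T_C)\setminus C$ to a single point; this yields a tree $\bar T_C$ with vertex set $C\sqcup\{\text{the components }W\}$, in which the collapsed points are pairwise non-adjacent. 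I then blow up each collapsed point $W$ back, replacing the star at $W$ by a spanning tree of the complete graph on its neighbour set $N(W)\subseteq C$, where that spanning tree is chosen in a coherent Borel fashion (for instance a star centred at a Lusin--Novikov-selected vertex of $N(W)$). Since blowing up a vertex of a tree into a tree on its set of neighbours again gives a tree, and the collapsed points do not interact, carrying out all the blow-ups produces a spanning tree $\Psi_C$ of $C$. Let $\Psi$ be the union, over all $E$-classes and all $F$-classes inside them, of these edge sets.

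It remains to see that $\Psi$ is a treeing of $F$. Its connected components are exactly the $F$-classes and each is a tree, so the only real point is Borelness: geodesics in the Borel forest $\Phi$, hence the hulls $T_C$, the components $W$, their neighbour sets, and the selections, all depend Borel-measurably on the data, so $\Psi\subseteq X\times X$ is Borel with countable sections, and by Lusin--Novikov it is a countable union of graphs of partial Borel isomorphisms, i.e. a graphing. Any partial Borel isomorphism with graph inside the measure preserving relation $F\subseteq E$ preserves $\mu$ (adjoin it to a graphing of $F$ and use that every transformation of a graphing of an m.p. relation is m.p.), so $\Psi$ is automatically measure preserving; and a treeing need only tree almost every class, which absorbs the null set removed at the start. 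The main obstacle is the branch-point bookkeeping of the previous paragraph: one must pick the replacement spanning trees at the (infinitely many, possibly infinite-valence) non-$C$ branch vertices of all the $T_C$ simultaneously, in a Borel and mutually coherent way, and verify that neither a single such surgery nor their aggregate reintroduces a cycle --- this is exactly where one uses that the classes of $E$ were genuine trees and that Lusin--Novikov uniformization is at hand. The remaining verifications (connectivity of the intermediate graph, the two ``for free'' points above, and the reduction to genuine trees) are routine.
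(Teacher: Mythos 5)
The paper offers no proof of this statement---it is quoted directly from Gaboriau \cite{gab-cout}---so your argument has to stand on its own. Its classwise combinatorics are fine: the convex hull $T_C$, the contraction of the components $W$ of $T_C\setminus C$, and the blow-ups do produce a spanning tree of each $F$-class $C$ (acyclicity of the simultaneous blow-up follows because any cycle involves only finitely many $W$'s, hence already appears after finitely many single blow-ups, each of which preserves treeness). The gap is exactly the step you flag at the end, and it is not closed by Lusin--Novikov. You need, for each component $W$, a tree structure on $N(W)$ depending only on $W$; your star requires a distinguished vertex $c_W\in N(W)$. Lusin--Novikov produces selections indexed by \emph{points}, i.e.\ a Borel map $v\mapsto c(v)\in N(W(v))$ for $v\in T_C\setminus C$; forcing it to be constant on components is an \emph{invariant} uniformization problem for the countable Borel equivalence relation ``same component'', which in general has no Borel (nor even $\mu$-a.e.) solution when the components are infinite and that relation is non-smooth.

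Concretely: let $S$ be a free p.m.p.\ automorphism of $(X_0,\mu_0)$ generating a non-smooth relation, put $X=X_0\times\{0,1\}$, and tree $E$ by the edges $(x,0)\sim(Sx,0)$ and $(x,0)\sim(x,1)$, so each $E$-class is a caterpillar: a line with one leaf at each spine vertex. Let $F$ identify two leaves iff they lie over the same $S$-orbit and be trivial on the spine. For a leaf class $C$, the hull $T_C$ is the whole caterpillar, $T_C\setminus C$ is the spine---a single infinite component $W$---and $N(W)=C$; a ``Lusin--Novikov-selected vertex of $N(W)$'' is then a Borel transversal of a non-smooth relation, which does not exist even modulo null sets. (The correct spanning tree of $C$ here joins leaves whose attachment points are adjacent; it must be read off from the geometry of $W$, not from a choice of basepoint, so the failure is not an artifact of choosing a star.) The known proofs avoid per-component choices altogether: Gaboriau argues via actions of relations on measurable fields of trees and a Bass--Serre-type theorem, while the Borel-theoretic argument of Jackson--Kechris--Louveau makes every selection indexed by a single vertex $v$ of the ambient tree (for each $v$, choose representatives of the $F$-classes meeting $[v]_E$ as a Borel function of $v$), which is where Lusin--Novikov genuinely applies. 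Your construction could plausibly be repaired along those lines, but as written the coherence step is a real obstruction rather than a routine verification.
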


Thus a subgroup of a treeable group is also treeable. There is a close relationship between treeings and cost, given by the following theorems.


\begin{theorem} \label{tree0} \cite[Theorem 1]{gab-cout} Let $E$ be a m.p. Borel equivalence relation on the standard Borel space $X$. If $\Phi$ is a treeing for $E$ then $C_{\mu}(\Phi) = C_{\mu}(E)$. 
\end{theorem}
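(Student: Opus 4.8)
The plan is the following. The inequality $C_\mu(E)\le C_\mu(\Phi)$ is immediate, since $\Phi$ is one of the graphings appearing in the infimum defining $C_\mu(E)$; so the whole content is the reverse inequality: for \emph{every} graphing $\Psi=\{\psi_j:C_j\to D_j\}$ of $E$ one must show $C_\mu(\Phi)\le C_\mu(\Psi)$. Assuming $C_\mu(\Psi)<\infty$ (otherwise there is nothing to prove) and replacing $\Psi$ by an equivalent countable graphing, I would first rewrite $\Psi$ in terms of the treeing. Since $\Phi$ generates $E$ and is a treeing, almost every class $\mathcal C$ carries the structure of a tree $T_\Phi(\mathcal C)$ in which any two points are joined by a unique reduced edge-path (geodesic), and the combinatorial type of the geodesic from $x$ to $\psi_j(x)$ depends Borel-measurably on $x$. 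Hence each $C_j$ splits into countably many Borel pieces on each of which $\psi_j$ equals a fixed reduced word $\varphi_{i_\ell}^{\varepsilon_\ell}\circ\cdots\circ\varphi_{i_1}^{\varepsilon_1}$ in the generators of $\Phi$. Subdividing $\Psi$ accordingly does not change its cost, and now the $\Psi$-graph on each class $\mathcal C$ becomes a connected spanning subgraph $H(\mathcal C)$ whose edges are, literally, geodesic segments of $T_\Phi(\mathcal C)$.

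The combinatorial heart is then a fiberwise statement: in each class one can assign to every edge of the tree $T_\Phi(\mathcal C)$ a \emph{distinct} edge of $H(\mathcal C)$ whose geodesic passes through it. This is exactly Hall's marriage condition for the bipartite incidence graph whose two sides are the edge set of $T_\Phi(\mathcal C)$ and the edge set of $H(\mathcal C)$, with adjacency whenever the former lies on the geodesic of the latter. To verify the condition, remove a finite set $S$ of $k$ edges from the tree: it falls into $k+1$ subtrees, an $H$-edge is incident (in the incidence graph) to some edge of $S$ precisely when its two endpoints lie in different subtrees, and connectivity of $H(\mathcal C)$ forces at least $k$ such ``inter-subtree'' edges. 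So there is a matching saturating the edge set of the tree, which fiberwise is the classical fact that a connected spanning subgraph has at least as many edges as a spanning tree.

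To conclude I would globalize this into a single Borel, $\mu$-preserving, almost-everywhere injective map $\Theta$ from the ``$\Phi$-edge space'' $\coprod_i A_i$, of total mass $C_\mu(\Phi)$, into the ``$\Psi$-edge space'' $\coprod_j C_j$, of total mass $C_\mu(\Psi)$; the existence of such a $\Theta$ yields $C_\mu(\Phi)\le C_\mu(\Psi)$ at once. Here one uses that the incidence relation of the previous paragraph is Borel, that each $\Psi$-edge meets only finitely many $\Phi$-edges, and that its connected components (the $E$-classes) satisfy Hall's condition; a measured marriage theorem --- or, more hands-on, exhausting each class by finite connected subgraphs via a marker/Rokhlin-type lemma, matching inside them with loss at most $\varepsilon$, and letting $\varepsilon\to 0$ --- produces $\Theta$.

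The main obstacle is this last, measure-theoretic step: promoting the fiberwise matchings of the second paragraph to an honest Borel measure-preserving injection with no loss of mass. The reduction of $\Psi$ to $\Phi$-words and the pointwise Hall argument are comparatively routine; the measured matching --- closely related to the machinery behind Alvarez's measured Kurosh theorem mentioned in the introduction --- is the delicate point.
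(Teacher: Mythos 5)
This statement is one of the foundational results the paper imports: it is quoted from Gaboriau \cite[Th\'eor\`eme 1]{gab-cout} (see also Kechris--Miller \cite{kechris}, Ch.~19--21) and the paper contains no proof of it, so there is nothing internal to compare your argument against. Judged on its own, your sketch correctly isolates the content (showing $C_\mu(\Phi)\le C_\mu(\Psi)$ for an arbitrary graphing $\Psi$), and the preliminary reduction --- subdividing $\Psi$ so that each piece acts by a fixed reduced $\Phi$-word, using that geodesics in the treeing are a.e.\ unique and depend on $x$ in a Borel way --- is sound, as is the verification of Hall's condition for \emph{finite} sets of tree edges.

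There are, however, two genuine gaps, and they sit exactly where the whole difficulty of the theorem lives. First, even fiberwise your matching is not justified: Hall's condition on finite subsets yields a saturating matching in an infinite bipartite graph only when the side to be saturated has finite degrees (the classical counterexample is $a_0$ joined to all $b_i$ and $a_i$ joined only to $b_i$), and here a single tree edge can be crossed by the geodesics of infinitely many $\Psi$-edges, so ``a connected spanning subgraph has at least as many edges as a spanning tree'' is not the finite combinatorial fact you invoke. Second, and more fundamentally, the globalization step is known to fail as stated: fiberwise existence of (even perfect) matchings does not imply the existence of a Borel, measure-preserving one --- the graphing of an irrational rotation, whose classes are bi-infinite lines, admits two perfect matchings per class but no measurable one, by ergodicity. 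Your fallback, exhausting each class by finite connected pieces with $\varepsilon$ loss via a marker lemma, requires the boundary of those pieces to be small, which is a F\o lner-type condition failing precisely for the non-amenable treeable relations this paper is about (e.g.\ orbit relations of $F_2$). So the ``delicate point'' you flag is not a technicality to be discharged by a Rokhlin argument; it is the theorem. Gaboriau's actual proof avoids any measurable marriage theorem and instead proceeds by a substantially longer induction using the restriction formula for complete sections, finite sub-graphings, and an exhaustion argument. If you want a matching-flavoured route, you would need at least a measured \emph{fractional} matching plus a mass-transport identity, and even that runs into the infinite-degree obstruction above; as it stands the proposal does not constitute a proof.
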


If the cost is finite, the converse holds.

\begin{theorem} \label{tree1} \cite[Theorem 1, Proposition I.11]{gab-cout} Let $E$ be a m.p. Borel equivalence relation on the standard Borel space $X$, with $C_{\mu}(E)<\infty$. A graphing $\Phi$ for $E$ is a treeing if and only if $C_{\mu}(\Phi)=C_{\mu}(E)$.
\end{theorem}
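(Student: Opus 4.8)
The plan is as follows. One implication is already in hand: if $\Phi$ is a treeing for $E$ then $C_{\mu}(\Phi) = C_{\mu}(E)$ by Theorem \ref{tree0} (and that direction needs no finiteness hypothesis). So the content of the statement is the converse, which I would prove by contraposition: assuming $\Phi$ is a graphing for $E$ that is \emph{not} a treeing, I would manufacture another graphing $\Phi'$ of the \emph{same} relation $E$ with $C_{\mu}(\Phi') < C_{\mu}(\Phi)$; together with $C_{\mu}(\Phi') \geq C_{\mu}(E) = C_{\mu}(\Phi)$ this is a contradiction. It is precisely here that the hypothesis $C_{\mu}(E) < \infty$ enters: a strict decrease below $C_{\mu}(\Phi)$ contradicts minimality only when $C_{\mu}(\Phi)$ is finite.

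To build $\Phi'$, first translate the failure of the treeing property into combinatorics of the graph $\mathcal{G}_\Phi$ on $X$ whose connected components are the $E$-classes. If it is not true that a.e.\ component is a tree, then on a Borel set of positive measure each class carries a non-backtracking cycle; by a measurable selection and by partitioning, I may assume there is a positive-measure Borel set $W \subseteq A_{i_0}$, for some fixed index $i_0$, such that for each $x \in W$ the oriented edge $(x, \varphi_{i_0}(x))$ is the first edge of such a cycle $\gamma_x$, i.e.\ $\varphi_{i_0}(x)$ is joined to $x$ by a path in $\mathcal{G}_\Phi$ avoiding that edge; passing to a further positive-measure piece, one can take the reduced length of $\gamma_x$ to be constant. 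The candidate is then $\Phi' = \big(\Phi \setminus \{\varphi_{i_0}\}\big) \cup \{\varphi_{i_0}|_{A_{i_0}\setminus W}\}$, whose cost is $C_{\mu}(\Phi) - \mu(W)$.

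The crux is to verify $E_{\Phi'} = E$. The inclusion $E_{\Phi'} \subseteq E$ is immediate. For the reverse, each deleted edge $(x,\varphi_{i_0}(x))$, $x\in W$, must be recovered inside $E_{\Phi'}$ via the path $\gamma_x$; the genuine difficulty — which I expect to be the main obstacle — is that $\gamma_x$ may itself traverse a \emph{deleted} $\varphi_{i_0}$-edge over another point of $W$, so the naive argument is circular. I would resolve this by choosing things as economically as possible: take a reduced $\Phi$-word of \emph{minimal} length $n \geq 1$ that equals the identity on a positive-measure set, and arrange $W$ and the cycles $\gamma_x$ accordingly. Minimality forces the selected cycles to be essentially embedded (a self-intersection would expose a strictly shorter relation, or permit a shortcut, contradicting the choice of $n$), so that the $\varphi_{i_0}$-edge removed over $x \in W$ is not among the edges used by $\gamma_x$; shrinking $W$ once more to a positive-measure Borel subset handles the residual measurable bookkeeping and the degenerate small-$n$ cases (loops and involutive edges, which do not obstruct the treeing property). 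Once $E_{\Phi'}=E$ is established, $C_{\mu}(\Phi') = C_{\mu}(\Phi) - \mu(W) < C_{\mu}(\Phi) = C_{\mu}(E)$ contradicts the definition of cost, finishing the proof. Alternatively, one can phrase the whole thing as an exhaustion — repeatedly delete redundant half-edges to reach a subgraphing of $\Phi$ generating $E$ with none left, argue that subgraphing is a treeing, and combine Theorem \ref{subtree} with Theorem \ref{tree0} — but the same circularity reappears and is dealt with in the same way.
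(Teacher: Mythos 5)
The paper does not actually prove this statement---it is quoted from Gaboriau's \emph{Co\^ut des relations d'\'equivalence et des groupes}---so the comparison below is with the standard proof of that result rather than with anything in this text. Your overall strategy is the correct and standard one: the forward implication is Theorem \ref{tree0}, and for the converse one shows that a non-treeing graphing of finite cost can be strictly cheapened, contradicting the definition of $C_{\mu}(E)$ as an infimum. You also correctly isolate the crux. The gap is that the resolution you offer does not address the difficulty you yourself named. Minimality of the word length $n$ gives (after discarding a positive-measure set where some coincidence occurs) that the vertices $x=x_0,x_1,\dots,x_{n-1}$ of the cycle $\gamma_x$ are pairwise distinct; but this only rules out $\gamma_x$ reusing \emph{its own} deleted edge $(x,\varphi_{i_0}(x))$. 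The real problem is that $\gamma_x$ may traverse the deleted edge $(y,\varphi_{i_0}(y))$ over a \emph{different} point $y\in W$ lying on $\gamma_x$, and embeddedness of each individual cycle says nothing about that. ``Shrinking $W$ once more'' is exactly where the remaining mathematical content sits, and it is not automatic: a positive-measure Borel set generally does meet its image under a measure-preserving transformation.

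What closes the argument---and what is missing from your sketch---is the following. The finitely many partial Borel maps $T_j\colon x\mapsto x_j$ for $1\le j\le n-1$ are fixed-point free on the relevant set (this is where your minimality/embeddedness observation is genuinely used), so the Borel graph they generate has uniformly bounded degree; by a finite Borel colouring (Kechris--Solecki--Todorcevic) or an elementary maximal-independent-set/marker argument, one obtains a positive-measure Borel subset $W$ with $T_j(W)\cap W=\emptyset$ for all $j$. For such a $W$ the cycle $\gamma_x$, $x\in W$, uses no deleted edge other than its own first edge, so $E_{\Phi'}=E$ and the cost drops by $\mu(W)>0$. With that lemma supplied, your proof is complete. (A minor correction: under the multigraph convention needed for the theorem to be true, loops $\varphi_i(x)=x$ and coincidences $\varphi_i(x)=\varphi_j(x)$ \emph{do} obstruct the treeing property---they are cycles of length $1$ and $2$---but these are the easy cases, handled by deleting the redundant half-edge outright.)
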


\begin{theorem} \label{tree2} \cite[Proposition 30.5]{kechris} Let $G\acts X$ a free p.m.p. Borel action. If $E_G^X$ is treeable then $C(G)=C(E_G^X)$.
\end{theorem}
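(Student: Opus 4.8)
Since $C(G)$ is by definition the infimum of $C(E_G^Y)$ over \emph{all} free p.m.p. Borel actions $G\acts Y$, the inequality $C(G)\le C(E_G^X)$ is automatic, and the content of the statement is the reverse bound $C(E_G^X)\le C(G)$. The plan is to compare the given action with an arbitrary free p.m.p. action $G\acts Y$ through the diagonal action $G\acts X\times Y$ on the product probability space, which is again free and p.m.p.

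The main technical ingredient is a way to lift graphings along the coordinate projections. Given a graphing $\Psi=\{\psi_j:C_j\to D_j\}$ of $E_G^Y$, freeness of the action on $Y$ lets me write $\psi_j(y)=h_j(y)\cdot y$ for a Borel cocycle $h_j\colon C_j\to G$ (its graph is a Borel subset of $C_j\times G$ with unique vertical sections, so Borelness follows from Lusin--Novikov). Define partial Borel isomorphisms of $X\times Y$ by $\tilde\psi_j(x,y)=\bigl(h_j(y)\cdot x,\ \psi_j(y)\bigr)$ with domain $X\times C_j$; each $\tilde\psi_j$ preserves the product measure by Fubini and contributes $\mu(X)\,\nu(C_j)=\nu(C_j)$ to the cost. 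Each elementary move equals $(x,y)\mapsto h_j(y)\cdot(x,y)$, so it lies inside $E_G^{X\times Y}$; conversely, using that $\Psi$ generates $E_G^Y$ together with freeness on $X$, one checks that a $\Psi$-path from $y$ to $gy$ accumulates exactly the group element $g$, so the parallel $\tilde\Psi$-path carries $(x,y)$ to $(gx,gy)$. Hence $\tilde\Psi$ is a graphing of $E_G^{X\times Y}$ with $C(\tilde\Psi)=C(\Psi)$, and taking the infimum over $\Psi$ gives $C(E_G^{X\times Y})\le C(E_G^Y)$.

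Now I bring in treeability of $E_G^X$. Pick a treeing $\Phi$ of $E_G^X$ and form its lift $\tilde\Phi$ on $X\times Y$ as above. By freeness, almost every $E_G^{X\times Y}$-class is a $G$-torsor, and the projection $(gx,gy)\mapsto gx$ is a bijection from it onto the corresponding $E_G^X$-class that carries the $\tilde\Phi$-graph isomorphically onto the $\Phi$-graph; since the latter is a.e. a tree, so is the former, i.e. $\tilde\Phi$ is a treeing of $E_G^{X\times Y}$. Theorem \ref{tree0} then gives $C(E_G^{X\times Y})=C(\tilde\Phi)=C(\Phi)=C(E_G^X)$. Combining with the bound $C(E_G^{X\times Y})\le C(E_G^Y)$ yields $C(E_G^X)\le C(E_G^Y)$ for every free p.m.p. action $G\acts Y$; taking the infimum over $Y$ gives $C(E_G^X)\le C(G)$, and with the trivial inequality this proves equality.

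The step I expect to be the real obstacle is verifying that the lifted family generates \emph{precisely} $E_G^{X\times Y}$ rather than just a sub-relation: this is exactly where freeness is essential, since it forces the group element accumulated along any generating path to be determined by its action on one coordinate alone, and one must also be careful about the Borel measurability of the cocycles $h_j$. Once these points are settled, the cost computation and the verification that the lift of a treeing is again a treeing are routine.
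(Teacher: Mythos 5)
The paper does not prove this statement---it is quoted from Kechris--Miller \cite[Proposition 30.5]{kechris}---so there is no in-paper argument to compare against; your proof is correct and is essentially the standard one: both coordinate projections of the diagonal action $G\acts X\times Y$ are class-bijective factor maps between free actions, lifting a graphing of $E_G^Y$ gives $C(E_G^{X\times Y})\le C(E_G^Y)$, and lifting the treeing of $E_G^X$ gives a treeing of $E_G^{X\times Y}$ of the same cost, whence $C(E_G^X)=C(E_G^{X\times Y})\le C(E_G^Y)$ by Theorem \ref{tree0}. (One cosmetic slip: the identification of the group element accumulated along a $\Psi$-path from $y$ to $gy$ uses freeness of the action on $Y$, not on $X$; since both actions are free this is harmless.)
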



Finally, we recall {\em complete sections}, which are often useful to compute cost.

\begin{definition} Let $E$ be a Borel equivalence relation on the standard Borel space $X$. A {\em complete section} for $E$ is a Borel subset $A\subset X$ meeting every class of $E$.
\end{definition}

If $E$ has countable classes and is m.p., then a complete section always has positive measure. By disregarding sets of measure zero, we need only ask that a complete section meets almost every class. If $E$ is an equivalence relation on $X$, and $A\subset X$, one defines the {\em restriction} of $E$ to $A$ as follows.

\[ E|_A = \{ (x,y) \in A\times A : (x,y)\in E \} \]

The relationship between cost and complete sections is given by the formula below.

\begin{theorem} \cite[Proposition II.6]{gab-cout} Let $E$ be a m.p. Borel equivalence relation with countable classes, on the standard Borel measure space $X$. Let $A\subset X$ be a complete section for $E$. Then
\[ C_{\mu}(E) = C_{\mu|_A}(E|_A) + \mu(X\setminus A) \]
\end{theorem}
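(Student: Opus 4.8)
The plan is to prove the two inequalities $C_\mu(E)\le C_{\mu|_A}(E|_A)+\mu(X\setminus A)$ and $C_\mu(E)\ge C_{\mu|_A}(E|_A)+\mu(X\setminus A)$ separately.

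For the first, start from an arbitrary measure preserving graphing $\Psi=\{\psi_j:A_j\to B_j\}$ of $E|_A$, with $A_j,B_j\subset A$, and enlarge it to a graphing of $E$ by tying every point of $X\setminus A$ back to $A$. Since $E$ has countable classes and $A$ meets (almost) every class, Lusin--Novikov uniformization gives a Borel map $r:X\setminus A\to A$ with $(x,r(x))\in E$; its graph is a Borel subset of $E$ with countable vertical sections, so a second application of Lusin--Novikov writes it as a countable disjoint union of graphs of Borel injections $\theta_n:C_n\to D_n$, the $C_n$ partitioning $X\setminus A$ and $D_n\subset A$. Each $\theta_n$ has graph in $E$, hence is measure preserving (adjoin it to any graphing of $E$ and invoke the remark that every graphing of a m.p.\ relation has measure preserving transformations). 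Then $\Phi:=\Psi\cup\{\theta_n\}_n$ is a measure preserving graphing with $E_\Phi=E$ — indeed $\Psi$ already generates $E|_A$, the $\theta_n$ make every point of $X\setminus A$ equivalent under $E_\Phi$ to a point of $A$, and transitivity of $E$ does the rest — and $C_\mu(\Phi)=\sum_j\mu(A_j)+\sum_n\mu(C_n)=C_{\mu|_A}(\Psi)+\mu(X\setminus A)$. Taking the infimum over $\Psi$ gives the first inequality, and in passing shows $C_{\mu|_A}(E|_A)<\infty \Rightarrow C_\mu(E)<\infty$.

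For the reverse inequality, take a measure preserving graphing $\Phi$ of $E$ with $C_\mu(\Phi)$ as close as we like to $C_\mu(E)$ — if $C_\mu(E)=\infty$ both sides are infinite by the implication just noted, so assume it is finite — and ``contract away'' the vertices of $X\setminus A$ from the graph $\Phi$ induces on each class. The basic move is to eliminate a point $v\in X\setminus A$ by deleting it together with all incident edges and reconnecting its former neighbours by a path among themselves: a vertex of degree $d$ loses $d$ edges and gains $d-1$, every class stays connected, and no point of $X\setminus A$ is isolated because $A$ is a complete section. Performing this for every point of $X\setminus A$ converts the graph on each class $C$ into a connected graph on $C\cap A$; the surviving edges join $E$-equivalent points and, being connected on each $C\cap A$, generate $E|_A$, so they form a graphing $\Psi$ of $E|_A$ with $C_{\mu|_A}(\Psi)\le C_\mu(\Phi)-\mu(X\setminus A)$, the point being that the net edge mass deleted equals $\mu(X\setminus A)$. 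Letting $C_\mu(\Phi)\downarrow C_\mu(E)$ yields the inequality.

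The hard part is to make this contraction rigorous. It must be carried out simultaneously over all classes by Borel choices — for instance, fix a Borel linear order on $X$ and process the points of $X\setminus A$ in that order — one must verify that this iteration (countable within each class, but global) converges to a bona fide measure preserving graphing of $E|_A$, and one must check that vertices of infinite degree, which can occur even when $C_\mu(E)<\infty$, do not spoil the balance ``edge mass removed minus edge mass added $=\mu(X\setminus A)$''; once the Borel bookkeeping is in place, measure preservation of the new edges is free since they all lie in $E$. This is the content of \cite[Proposition II.6]{gab-cout}.
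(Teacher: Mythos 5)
This result is quoted from Gaboriau (Proposition II.6 of \cite{gab-cout}); the paper gives no proof of it, so your argument can only be judged on its own terms. The first inequality, $C_\mu(E)\le C_{\mu|_A}(E|_A)+\mu(X\setminus A)$, is done correctly and completely: the Lusin--Novikov selector $r:X\setminus A\to A$, the decomposition of its graph into countably many partial Borel injections of total domain mass $\mu(X\setminus A)$, and the observation that any partial isomorphism whose graph lies in a m.p.\ relation is itself measure preserving are all sound, and the generation argument (via $r(x)\,E|_A\,r(y)$) works.

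The reverse inequality, however, is not actually proved: you describe a vertex-by-vertex contraction and then explicitly defer the Borel bookkeeping, the convergence of the transfinite/global iteration, and the infinite-degree problem to the very proposition you are trying to establish. That is a genuine gap, and the scheme as literally stated does break down: at a vertex of infinite degree, ``lose $d$ edges, gain $d-1$'' is an $\infty-\infty$ computation, and the per-class edge count does not translate directly into cost, since the cost of a graphing is the measure of the domains of partial isomorphisms, each of which bundles edges from uncountably many classes at once. The standard way to make your idea rigorous avoids iteration entirely. Given a m.p.\ graphing $\Phi$ of $E$, choose in a Borel way, for each $x\in X\setminus A$, the first edge $(x,f(x))$ of a shortest $\Phi$-path from $x$ to $A$. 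No edge is selected by both of its endpoints (the distance to $A$ cannot drop in both directions), so these edges form a Borel forest $T$ of total mass exactly $\mu(X\setminus A)$, rooted in $A$, with associated retraction $\pi:X\to A$ given by $\pi(x)=f^{d(x,A)}(x)$. Deleting $T$ from $\Phi$ leaves cost $C_\mu(\Phi)-\mu(X\setminus A)$; pushing each surviving edge $(x,y)$ forward to $(\pi(x),\pi(y))$, after cutting domains into countably many pieces on which $\pi$ and $\pi\circ\varphi_i$ are injective (they are countable-to-one and measure preserving on such pieces), yields a graphing of $E|_A$ of cost at most $C_\mu(\Phi)-\mu(X\setminus A)$. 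It generates $E|_A$ because any $\Phi$-path between two points of $A$ maps under $\pi$ to a path in the new graphing: the $T$-edges collapse and the remaining edges survive. This single-step forest-and-retraction construction is what should replace your iterative contraction; without it (or an equivalent device), the second half of your proof is a sketch rather than an argument.
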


\subsection{Measure free factors}\label{sec:mff}

Here we discuss the notion of {\em measure free factors} of groups, which is the main subject of our study. Then we will turn to the problem of finding such measure free factors, recalling the results of D. Gaboriau \cite{gab-mff}. First we need the following definitions about equivalence relations.

\begin{definition} Let $E_1$, $E_2$ be Borel equivalence relations on the standard Borel space $X$. We say that $E_1$ and $E_2$ are {\em orthogonal}, and write $E_1\perp E_2$, if for every cycle $(x_i)$, $i\in \Z_{2n}$, of elements of $X$ such that 
\begin{enumerate}
\item $(x_i,x_{i+1}) \in E_1$ for all $i$ odd. 
\item $(x_i,x_{i+1}) \in E_2$ for all $i$ even. 
\end{enumerate}
we have that $x_i=x_{i+1}$ for some $i$.
\end{definition}

\begin{definition}  Let $E$ be a m.p. Borel equivalence relation. We say that $E$ is the {\em free product} of the Borel sub-equivalence relations $E_1$, $E_2$, and write $E=E_1*E_2$, if
\begin{enumerate}
\item $E$ is generated by their union $E_1\cup E_2$.
\item There is a full measure set $B$ so that $E_1|_B\perp E_2|_B$. (i.e. $E_1\perp E_2$ for almost every class of $E$).
\end{enumerate} 
\end{definition} 

The case for multiple factors is a clear generalization.

These definitions reflect the notion of free product for groups. Specifically, if $G\acts X$ is a free m.p. Borel action, and $G$ splits as  $G=H*K$, then it is easy to check that $E_G^X = E_{H}^X*E_K^X$.


\begin{definition} \label{mffdef} \cite[Definition 3.1]{gab-mff} A subgroup $H\leq G$ is a {\em measure free factor} of $G$ if there exists a free p.m.p. Borel action $G\acts X$ on a standard Borel probability space, and a Borel equivalence relation $E'$ on $X$ such that \[ E_G^X = E_H^X*E' \]
\end{definition}


From the remark above, it is clear that if $H$ is a free factor of $G$ then it is also a measure free factor. It is also an easy fact that the image of a measure free factor by an automorphism of $G$ is again a measure free factor of $G$.
Free factors are not the only examples of measure free factors, as shown by the following theorems of Gaboriau.


\begin{theorem} \label{surface1} \cite[Theorem 3.2]{gab-mff} Let $F = \langle a_1,b_1,\ldots,a_g,b_g \rangle$ be a free group of rank $2g$. Then the element $w=[a_1,b_1]\cdots [a_g,b_g]$ generates a measure free factor of $F$.
\end{theorem}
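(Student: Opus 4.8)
The plan is to build the required decomposition from the surface model of the pair $(F,\langle w\rangle)$. Identify $F$ with the fundamental group of the compact orientable surface $\Sigma_{g,1}$ of genus $g$ with one boundary circle, so that $F$ is free of rank $2g$ and the boundary curve represents $w=[a_1,b_1]\cdots[a_g,b_g]$; filling in the boundary with a disk gives the closed surface $\Sigma_g$ and the quotient $\Gamma=F/\langle\!\langle w\rangle\!\rangle=\pi_1(\Sigma_g)$. The statement has genuine content precisely because $\langle w\rangle$ is \emph{not} a free factor of $F$: since $\Gamma$ is not free of rank $2g-1$, no free basis of $F$ contains $w$, and in fact $\langle w\rangle$ together with any rank-$(2g-1)$ subgroup fails to generate $F$. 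So one cannot take $E_F^X=E_{\langle w\rangle}^X*E'$ with $E'=E_K^X$ for a subgroup $K$; the sub-relation $E'$ must be a genuinely measure-theoretic object, and (as one sees from the impossibility of a measurable family of bases of $F$ passing through $w$) it will be treeable only through \emph{partially} defined isomorphisms.

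The construction I would aim for is this. Start from a treeable model relation of cost $2g-1$ --- for instance the orbit relation of a free ergodic p.m.p. action $\Gamma\curvearrowright(Z,\nu)$, using that closed surface groups are treeable of fixed price $-\chi(\Sigma_g)+1=2g-1$ (Gaboriau). Then produce a free p.m.p. action $F\curvearrowright(X,\mu)$ on a space $X$ that fibers over $Z$ with circle fibers --- the extra circle direction being ``transverse to a spine of $\Sigma_{g,1}$'' and carrying the boundary loop --- such that: (i) $w$ acts by a free transformation $\psi_w$ covering the identity of $Z$ (automatic since $w\in\ker(F\to\Gamma)$), so $\{\psi_w\}$ generates $E_{\langle w\rangle}^X$; (ii) $\{\psi_w\}$ together with a treeing $T'$ of the pulled-back model generates $E_F^X$; and (iii) on almost every $F$-orbit the sub-relation $E'$ generated by $T'$ is orthogonal to $E_{\langle w\rangle}^X$. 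Cost then serves only as a check and a construction aid: any free p.m.p. action of $F=F_{2g}$ has $C(E_F^X)=2g$ (Theorem \ref{tree0} applied to a free basis), while $\{\psi_w\}\cup T'$ has cost $1+(2g-1)=2g$, so by Theorem \ref{tree1} it is in fact a treeing; together with (ii) and (iii) this yields $E_F^X=E_{\langle w\rangle}^X*E'$, whence by Definition \ref{mffdef} the cyclic subgroup $\langle w\rangle$ is a measure free factor of $F$.

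The hard part is the middle step: the explicit circle-fibered action and the orthogonality (iii). The obvious shortcut --- pull back the model action and simply twist it by rotations in a transverse circle --- is doomed, because $w\in[F,F]$, so every homomorphism from $F$ to a rotation group kills $w$ and $\psi_w$ would act trivially on the fibers; the transverse part of the action must therefore be a genuinely nonabelian measurable cocycle, and showing that it can be chosen compatibly with $T'$ is exactly the assertion $E'\perp E_{\langle w\rangle}^X$. A variant that eases this is to induct on $g$: cut $\Sigma_{g,1}$ along a simple closed curve into a one-holed torus and a lower-genus piece, realize the resulting amalgam or HNN splitting of $F$, and reduce to the base case $g=1$ (the once-punctured torus, $F_2=\langle a,b\rangle$, $w=[a,b]$, with two independent irrational rotations on $\mathbb{T}^2$ as the cost-$1$ model). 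This reduction, however, concerns a single boundary curve of a surface with several boundary components, so it genuinely needs the more general Proposition \ref{surface4} rather than Theorem \ref{surface1} by itself, and the base case must in any event be handled by hand.
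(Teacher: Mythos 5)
This theorem is not proved in the present paper at all: it is Gaboriau's result, quoted from \cite[Theorem 3.2]{gab-mff} and used as an input (it feeds Lemma \ref{surface3} and everything downstream). So what you are sketching is a proof of Gaboriau's theorem, and as written it is a strategy rather than a proof: you defer exactly the step in which the entire content of the theorem lives, namely the construction of a free p.m.p.\ action $F\acts X$ and a sub-relation $E'$ with a treeing $T'$ of cost $2g-1$ such that $\{\psi_w\}\cup T'$ \emph{generates} $E_F^X$. Without generation the cost identity $1+(2g-1)=2g=C(E_F^X)$ certifies nothing. And the difficulty there is genuine: if one pulls back a treeing of $E_\Gamma^Z$ (for $\Gamma=\pi_1(\Sigma_g)=F/\langle\!\langle w\rangle\!\rangle$) through a skew product over $Z$, then inside a fiber of $X\to Z$ the relation generated together with $\psi_w$ reaches only the subgroup of $\ker(F\to\Gamma)$ generated by the conjugates $f_\gamma w f_\gamma^{-1}$, where the $f_\gamma$ are the particular lifts determined by the tree-paths of the chosen treeing; one must prove that these specific conjugates --- one per coset, for a transversal one does not get to choose freely --- generate the whole normal closure $\langle\!\langle w\rangle\!\rangle$. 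That is the heart of the matter and it is absent from the proposal. The fallback you offer, induction on $g$ by cutting along simple closed curves, is also unavailable as stated: the base case is again deferred, and within this paper's logic the multi-boundary statement it would require (Proposition \ref{surface4}) is itself deduced from Theorem \ref{surface1} via Lemma \ref{surface3}, so invoking it would be circular.

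Two secondary points. First, the circle-fiber discussion is a red herring: there is no need for $w$ to act by rotations on anything, and the obstruction you raise (that $w\in[F,F]$ kills every homomorphism to a rotation group) evaporates if one takes $X=Y\times Z$ with $Y$ an arbitrary free p.m.p.\ $F$-space and $F$ acting diagonally, through $F\to\Gamma$ on the $Z$-factor; freeness of the $F$-action and of $\psi_w$ then come from the $Y$-factor alone. Second, your item (iii) does not need a separate verification: once $\{\psi_w\}\cup T'$ is known to generate $E_F^X$ and the cost count shows it is a treeing (Theorem \ref{tree1}), the decomposition $E_F^X=E_{\langle w\rangle}^X*E'$ follows formally, because partitioning a treeing into two subfamilies always exhibits the generated relation as the free product of the two sub-relations. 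So the correct logical order is: construct the action, prove generation, count cost, conclude; orthogonality is a corollary, and the hard part is generation, not orthogonality.
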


\begin{corollary} \label{surface2} \cite[Corollary 3.5]{gab-mff} Let $S$ be an orientable surface with boundary, of genus at least $1$. Let $\gamma_1,\ldots,\gamma_k$ be the boundary curves of $S$. Then the boundary subgroup $ \langle [\gamma_1],\ldots,[\gamma_k]\rangle \leq \pi_1(S)$ is a measure free factor of $\pi_1(S)$.
\end{corollary}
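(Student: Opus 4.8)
The plan is to reduce Corollary~\ref{surface2} to Theorem~\ref{surface1} by first identifying the boundary subgroup algebraically and then invoking stability of measure free factors under free products.

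First I would fix a standard presentation of the fundamental group. Since $S$ is a compact orientable surface of genus $g\geq 1$ with boundary curves $\gamma_1,\ldots,\gamma_k$ ($k\geq 1$), we may pick generators so that
\[ \pi_1(S)=\langle a_1,b_1,\ldots,a_g,b_g,\gamma_1,\ldots,\gamma_k \mid [a_1,b_1]\cdots[a_g,b_g]\,\gamma_1\cdots\gamma_k=1\rangle , \]
the $\gamma_j$ representing the boundary classes. Put $w=[a_1,b_1]\cdots[a_g,b_g]$ and $F_0=\langle a_1,b_1,\ldots,a_g,b_g\rangle$. The single relation lets us eliminate $\gamma_k=(\gamma_1\cdots\gamma_{k-1})^{-1}w^{-1}$, so that $F:=\pi_1(S)=F_0*\langle\gamma_1\rangle*\cdots*\langle\gamma_{k-1}\rangle$ is free. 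Since $w^{-1}=\gamma_1\cdots\gamma_k$, the boundary subgroup $\langle\gamma_1,\ldots,\gamma_k\rangle$ equals $\langle\gamma_1,\ldots,\gamma_{k-1},w\rangle$; and because $w\neq 1$ lies in the free factor $F_0$ while $\gamma_1,\ldots,\gamma_{k-1}$ freely generate the complementary free factor, a reduced word argument shows that this subgroup is the internal free product $\langle w\rangle*\langle\gamma_1\rangle*\cdots*\langle\gamma_{k-1}\rangle$. (Different choices of arcs to the boundary replace this subgroup by a conjugate, and measure free factors are invariant under inner automorphisms, so these representatives may be used without loss of generality.)

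Write $C=\langle\gamma_1\rangle*\cdots*\langle\gamma_{k-1}\rangle$, so $F=F_0*C$ and the boundary subgroup is $\langle w\rangle*C$. By Theorem~\ref{surface1}, $\langle w\rangle$ is a measure free factor of $F_0$. The corollary then follows from the general fact that measure free factors are stable under free products: \emph{if $H$ is a measure free factor of a countable group $A$ and $K$ is any countable group, then $H*K$ is a measure free factor of $A*K$.} Applying this with $A=F_0$, $H=\langle w\rangle$, $K=C$ gives that $\langle w\rangle*C$ is a measure free factor of $F_0*C=F$, which is exactly the assertion. (When $k=1$ the group $C$ is trivial and the statement reduces to Theorem~\ref{surface1} itself.)

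It remains to prove the displayed fact, and that is where the real work lies. I would take a free p.m.p. action $A\acts (X,\mu)$ realising $E_A^X=E_H^X*E'$, together with a free p.m.p. action $K\acts(Z,\zeta)$ (say Bernoulli), and assemble from them a free p.m.p. action of $A*K$ on a common standard probability space $Y$ realising the free product both of the groups and of the given decomposition on the $A$-side, namely $E_{A*K}^Y=E_A^Y*E_K^Y$ with $E_A^Y=E_H^Y*E''$ for a suitable Borel sub-relation $E''$; such an action is a measure-theoretic free product of the two given actions, which can be built with the sort of (co)induced-action / amalgamation machinery recalled in Subsection~\ref{sec:ind}, a free product being an amalgam over the trivial subgroup. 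Granting this $Y$, the manipulations $E_{A*K}^Y=E_A^Y*E_K^Y=(E_H^Y*E'')*E_K^Y=(E_H^Y*E_K^Y)*E''=E_{H*K}^Y*E''$ (using that orthogonality passes to sub-relations, and that $E_{H*K}^Y=E_H^Y*E_K^Y$ because the free action of $H*K$ on $Y$ splits along $H*K$) give the desired decomposition. The main obstacle is the construction and verification of $Y$: one must arrange that the $A$- and $K$-actions on $Y$ are free, measure preserving and jointly generate $E_{A*K}^Y$, and, crucially, that the orthogonality demanded by the definition of a free product of equivalence relations holds for almost every class, both between $E_A^Y$ and $E_K^Y$ and, within the $A$-part, between $E_H^Y$ and $E''$. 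The rest is bookkeeping with the definitions of Subsection~\ref{sec:mff}.
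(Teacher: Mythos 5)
Your argument is correct, and it is essentially the route the paper itself takes: Corollary \ref{surface2} is only cited from Gaboriau here without proof, but your decomposition $\pi_1(S)=F_0*\langle\gamma_1\rangle*\cdots*\langle\gamma_{k-1}\rangle$ with boundary subgroup $\langle w\rangle*\langle\gamma_1\rangle*\cdots*\langle\gamma_{k-1}\rangle$ is exactly the argument the paper uses for the non-orientable analogue in the second half of the proof of Lemma \ref{surface3}. The free-product stability fact you isolate and sketch at the end is precisely Lemma \ref{freeprod} (apply it with $H_2=G_2=K$, noting that a group is trivially a measure free factor of itself, and that common measure free factors $H,K$ with $\langle H,K\rangle\cong H*K$ satisfy $E_H^Y*E_K^Y=E_{H*K}^Y$), so you could simply invoke that lemma instead of reconstructing the coinduction argument.
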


In general, finding measure free factors is hard. Gaboriau \cite{gab-mff} posed the problem of finding which elements of a free group generate a cyclic measure free factor. It is shown in \cite{gab-mff} that such an element cannot be a proper power.



Measure free factors can be used to construct new treeable groups, via amalgamated products. The following is obtained by combining the arguments of Theorem 3.13 and Theorem 3.17 in \cite{gab-mff}. 

\begin{theorem} \label{amalgam} Let $G=\Gamma_1*_{\Lambda}\Gamma_2$, where $\Gamma_i$ are treeable groups, and $\Lambda$ is a measure free factor of $\Gamma_1$. Let also $H\leq \Gamma_2$ be a measure free factor of $\Gamma_2$. Then $G$ is treeable, and $H\leq G$ is a measure free factor of $G$.
\end{theorem}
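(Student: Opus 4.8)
The plan is to build a free p.m.p. action of $G=\Gamma_1*_\Lambda\Gamma_2$ on a standard Borel probability space $X$ whose orbit equivalence relation is treeable, and simultaneously arranges that $E_G^X = E_H^X * E'$ for some Borel $E'$, where $H\le\Gamma_2$. Since the statement asserts this combines the arguments of Gaboriau's Theorems 3.13 and 3.17 in \cite{gab-mff}, I would proceed in two stages that mirror those results.

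First I would use the hypothesis that $\Lambda$ is a measure free factor of $\Gamma_1$: there is a free p.m.p. action $\Gamma_1\acts Y$ with $E_{\Gamma_1}^Y = E_\Lambda^Y * E_1'$. Likewise, since $\Gamma_2$ is treeable and $H\le\Gamma_2$ is a measure free factor, fix a free p.m.p. action $\Gamma_2\acts Z$ with $E_{\Gamma_2}^Z$ treeable and $E_{\Gamma_2}^Z = E_H^Z * E_2'$. The point of the measure-free-factor hypothesis on $\Lambda$ is that the copy of $E_\Lambda^Y$ sitting freely inside $E_{\Gamma_1}^Y$ can be glued to a copy of $E_\Lambda^{\text{(induced)}}$ coming from $\Gamma_2$'s side: one forms a common free p.m.p. action of both $\Gamma_1$ and $\Gamma_2$ in which the $\Lambda$-relations agree, using co-induction / induction of actions from $\Lambda$ up to $\Gamma_1$ and to $\Gamma_2$ (this is exactly the kind of construction recalled in subsection \ref{sec:ind}). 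Concretely, one induces a common $\Lambda$-action to an action of the amalgam $G$; the free product decomposition $E_{\Gamma_1}^Y = E_\Lambda^Y * E_1'$ is what guarantees that after gluing, the combined relation on the $\Gamma_1$-part is still a free product with the $\Lambda$-part amalgamated cleanly, so no new cycles are created and the resulting graphing on the $G$-orbits remains a treeing.

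Next, to track the cost / treeability: by Theorem \ref{tree1} and Theorem \ref{tree0}, a graphing is a treeing precisely when its cost equals the cost of the relation, so I would exhibit an explicit graphing of $E_G^X$ obtained by overlaying a treeing of the $\Gamma_2$-side, a treeing of the $\Gamma_1$-side, and the identification along $\Lambda$, and verify that the orthogonality condition in the definition of free product holds on a full-measure set — a cycle alternating between $E_H^X$-edges and $E'$-edges, where $E'$ collects all the remaining generators (the $\Gamma_1$-generators together with the $\Gamma_2$-generators not in $H$, modulo the $\Lambda$-amalgamation), must backtrack because the only relations among $G$-generators come from the amalgamation over $\Lambda$, and $\Lambda$ has been absorbed into $E'$ (not into $E_H^X$) by choosing $H$ disjoint from $\Lambda$ inside the free product structure of $\Gamma_2$. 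This simultaneously gives treeability of $E_G^X$ (hence of $G$) and the decomposition $E_G^X = E_H^X * E'$, which is the assertion that $H$ is a measure free factor of $G$.

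The main obstacle I expect is the gluing step: producing a single free p.m.p. $G$-action $X$ that restricts, up to orbit equivalence, to the chosen $\Gamma_1$-action $Y$ on one part and the chosen $\Gamma_2$-action $Z$ on the other, with the two $\Lambda$-sub-relations genuinely identified and with freeness preserved. This requires the induced/co-induced action machinery and a Borel selection of the amalgamation pattern over $\Lambda$-cosets — essentially the content of Gaboriau's Theorem 3.13 — and checking that the naturally associated graphing has cost equal to $C(E_G^X)$ so that Theorem \ref{tree1} certifies it as a treeing. Everything after that (verifying orthogonality, hence the free product decomposition) is a combinatorial bookkeeping argument on cycles in the orbit graph, of the same flavor as the remark that $E_G^X = E_H^X * E_K^X$ when $G=H*K$ acts freely.
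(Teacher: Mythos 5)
Your overall architecture --- build one free p.m.p.\ $G$-action retaining both free-product decompositions, overlay treeings, and rule out cycles --- is the same as the one the paper uses for the HNN analogue (Proposition \ref{HNN}); the paper itself does not reprove Theorem \ref{amalgam} but cites Gaboriau's Theorems 3.13 and 3.17. However, two steps of your plan are off as stated. First, the co-induction goes the wrong way: you should co-induce the chosen actions $\Gamma_1\acts Y$ and $\Gamma_2\acts Z$ \emph{up to $G$} and take the diagonal product (exactly as in Lemma \ref{freeprod} and the proof of Proposition \ref{HNN}); by Lemma \ref{coind1} this yields a single free p.m.p.\ $G$-action $W$ with $E_{\Gamma_1}^W=E_\Lambda^W*E_1''$ and $E_{\Gamma_2}^W=E_H^W*E_2''$, both treeable. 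Co-inducing ``from $\Lambda$ up to $\Gamma_1$ and to $\Gamma_2$'' does not produce a $G$-action and loses the two decompositions you fixed; and there is no separate ``gluing of the two $\Lambda$-relations'' to perform, since they agree automatically as restrictions of one $G$-action.

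Second, your orthogonality argument leans on ``$\Lambda$ being absorbed into $E'$'' by ``choosing $H$ disjoint from $\Lambda$ inside the free product structure of $\Gamma_2$''. That is not a hypothesis and cannot be arranged in general: nothing forces $E_\Lambda^W$ to avoid $E_H^W$. The correct mechanism is different. Write a treeing of $E_{\Gamma_1}^W$ as $\Phi_\Lambda\cup\Psi_1$ with $\Phi_\Lambda$ a treeing of $E_\Lambda^W$ (this is where the hypothesis that $\Lambda$ is a measure free factor of $\Gamma_1$ enters), and take a treeing $\Phi_H\cup\Psi_2$ of $E_{\Gamma_2}^W$ adapted to $E_{\Gamma_2}^W=E_H^W*E_2''$. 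Since $E_\Lambda^W\subset E_{\Gamma_2}^W$, the family $\Omega=\Psi_1\cup\Phi_H\cup\Psi_2$ already generates $E_G^W$; one then shows $\Omega$ is a treeing via the normal form theorem for $\Gamma_1*_\Lambda\Gamma_2$, where the decomposition $E_{\Gamma_1}^W=E_\Lambda^W*E_1''$ is precisely what forces every $\Gamma_1$-syllable of a putative reduced cycle to lie outside $\Lambda$. The decomposition $E_G^W=E_H^W*E'$, with $E'$ generated by $\Psi_1\cup\Psi_2$, then falls out of the treeing property with no disjointness of $H$ and $\Lambda$ required. Note also that the cost route you mention is circular here: Theorem \ref{tree1} cannot certify $\Omega$ as a treeing unless you already know $C(E_G^W)$, which is exactly what the cycle argument establishes.
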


In particular, if $\Lambda$ is also a measure free factor of $\Gamma_2$, then $\Lambda$ is a measure free factor of the amalgamation $G$.


We would like to have a similar result for HNN extensions. Let $G$ be a group, $H\leq G$ a subgroup and $\alpha:H\to G$ an injective homomorphism. We define the HNN extension $G*_H$ by the following presentation: \[ G*_H = \langle G,t | tht^{-1} = \alpha(h) \mbox{ for } h\in H \rangle\] To conclude that $G*_H$ is treeable we need stronger hypotheses: We still assume that $H$ is a measure free factor of $G$, but also that $\alpha(H)$ is contained in a subgroup $H'\leq G$ so that $H$ and $H'$ are {\em common measure free factors} of $G$, as defined next.



\begin{definition}\label{commonmff} Let $G$ be a group, and $H,K\leq G$ subgroups. Then $H$ and $K$ are {\em common measure free factors} of $G$ if there exists a free p.m.p. Borel action $G\acts X$ on a standard Borel probability space, and a Borel equivalence relation $E'$ on $X$ such that \[ E_G^X = E_H^X*E_K^X*E' \] 
\end{definition}

For more than two subgroups the definition is analogous. Notice that if $H$ and $K$ are common measure free factors, then $H\cap K = \{1 \}$ unless $H=K$, and moreover, the subgroup generated by $H$ and $K$ in $G$ is isomorphic to $H*K$. With similar techniques as those involved in \ref{amalgam}, we can prove the following.

\begin{proposition}\label{HNN} Let $G$ be a treeable group, $H,H',K$ be subgroups of $G$, and $\alpha:H\to H'$ be an injective homomorphism. If $H$, $H'$ and $K$ are common measure free factors of $G$, then the HNN extension $\Gamma = G*_H$ is treeable and $K\leq\Gamma$ is a measure free factor of $\Gamma$.
\end{proposition}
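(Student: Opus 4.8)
The plan is to run, for HNN extensions, the argument behind Gaboriau's Theorem \ref{amalgam}, with coinduced actions (Subsection \ref{sec:ind}) doing the work of the gluing. First I would replace the given data by a single action of $G$. Choose a free p.m.p.\ action $G\acts X_1$ realizing the common measure free factor condition $E_G^{X_1}=E_H^{X_1}*E_{H'}^{X_1}*E_K^{X_1}*E'$ and a free p.m.p.\ action $G\acts X_0$ with $E_G^{X_0}$ treeable, and pass to the diagonal action on $X=X_0\times X_1$. Since the action is free, the projection $X\to X_0$ is a bijection on $G$-orbits, and the partial isomorphisms of a treeing of $E_G^{X_0}$ are coded by Borel maps to $G$; lifting such a treeing coordinate-wise shows $E_G^X$ is treeable, while the same class-bijectivity transports the decomposition to $E_G^X=E_H^X*E_{H'}^X*E_K^X*R$ for a suitable Borel sub-relation $R$. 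Now set $Y=\mathrm{CoInd}_G^{\Gamma}(X)$ with its free p.m.p.\ $\Gamma$-action, where $\Gamma=G*_H$; the evaluation $Y\to X$, $f\mapsto f(e)$, is $G$-equivariant and bijective on $G$-orbits, so again $E_G^Y$ is treeable and $E_G^Y=E_H^Y*E_{H'}^Y*E_K^Y*R_Y$. By Theorem \ref{subtree} the four factors are treeable; fix treeings $\tilde\Phi_{H'},\tilde\Phi_K,\tilde\Phi_R$ of $E_{H'}^Y,E_K^Y,R_Y$, and let $\psi_t$ be the transformation $f\mapsto t\cdot f$ of $Y$.

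The heart of the proof is to show that, after \emph{discarding a treeing of $E_H^Y$ and adjoining the single transformation $\psi_t$}, the family
\[\Psi=\tilde\Phi_{H'}\sqcup\tilde\Phi_K\sqcup\tilde\Phi_R\sqcup\{\psi_t\}\]
is a treeing of $E_\Gamma^Y$. That $\Psi$ generates $E_\Gamma^Y$ is immediate: for $h\in H$ one has $h=t^{-1}\alpha(h)t$ with $\alpha(h)\in H'$, so $E_H^Y$ lies in the relation generated by $\{\psi_t\}$ together with $E_{H'}^Y$; hence $\Psi$ recovers $E_G^Y$ and then $E_\Gamma^Y$. Its cost is the expected one: using Theorem \ref{tree0} and the additivity of cost over free products of equivalence relations (itself a consequence of Theorem \ref{tree0}, since the union of treeings of orthogonal factors is again a treeing), $C(\Psi)=C(E_{H'}^Y)+C(E_K^Y)+C(R_Y)+1=C(E_G^Y)-C(E_H^Y)+1$. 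To prove that $\Psi$ induces a tree on almost every $\Gamma$-orbit, I would collapse, inside a fixed orbit, each $E_G^Y$-class to a point: the $\tilde\Phi$-edges stay inside these classes, the $\psi_t$-edges run strictly between them (no element $t^{\pm1}gt^{\pm1}$ with $g\in G$ and equal exponents lies in $G$, by Britton's lemma), and the resulting quotient graph, once parallel edges are identified, is the Bass--Serre tree of $G*_H$. A reduced cycle of $\Psi$ therefore projects to a backtracking closed walk in that tree; tracing the backtrack back to $\Psi$ produces, over a single edge of the tree, a $\tilde\Phi$-path witnessing a relation $(y,g'y)\in E_{H'}^Y*E_K^Y*R_Y$ with $g'\in H$ forced by the structure of the HNN extension (here one uses $tGt^{-1}\cap G=\alpha(H)\subseteq H'$, so the edge stabilizer lies inside the common factor $H'$). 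Since $E_H^Y\perp(E_{H'}^Y*E_K^Y*R_Y)$ almost everywhere --- which is exactly what ``$H,H',K$ are common measure free factors'' provides --- this forces $g'=1$, collapsing the backtrack and contradicting reducedness. Equivalently, this step amounts to checking that $\Psi$ attains the cost $C(E_\Gamma^Y)$, so that Theorem \ref{tree1} applies. In either form $E_\Gamma^Y$ is treeable.

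Finally, partitioning the edge set of the treeing $\Psi$ as $\tilde\Phi_K\sqcup\bigl(\tilde\Phi_{H'}\sqcup\tilde\Phi_R\sqcup\{\psi_t\}\bigr)$ exhibits $E_\Gamma^Y=E_K^Y*R'$ with $R'$ generated by the second block, since any partition of the edges of a treeing into two parts yields a free product decomposition (sub-forests of a tree are orthogonal). With the free p.m.p.\ action $\Gamma\acts Y$ and the relation $R'$, this is precisely the assertion that $K$ is a measure free factor of $\Gamma$.

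I expect the main obstacle to be the treeing verification in the second paragraph: identifying the collapsed graph with the Bass--Serre tree of $G*_H$ and turning a putative reduced cycle into a violation of the orthogonality $E_H^Y\perp(E_{H'}^Y*E_K^Y*R_Y)$ --- equivalently, establishing the matching lower bound $C(E_\Gamma^Y)\ge C(E_G^Y)-C(E_H^Y)+1$. The reduction to a single action and the transport of the free product decomposition along the class-bijective maps $X\to X_0$ and $Y\to X$ should be routine but need care.
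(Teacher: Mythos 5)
Your proposal follows the paper's proof essentially step for step: the same diagonal action $X=X_0\times X_1$, the same coinduction to $Y=\mathrm{CoInd}_G^{\Gamma}X$, and the same candidate treeing obtained by discarding the treeing of $E_H^Y$ and adjoining the stable letter to treeings of $E_{H'}^Y$, $E_K^Y$ and the remaining factor. The verification that a reduced cycle would contradict the HNN normal form together with the orthogonality of $E_H^Y$ to the relation generated by the other factors is also exactly the paper's argument, merely rephrased in terms of the Bass--Serre tree.
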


We give a proof of this in the next section. Considering common measure free factors gives also a refinement of \ref{amalgam}, which is obtained by the same argument we will use for \ref{HNN}.


\begin{proposition} \label{amalgam+} Let $G=\Gamma_1*_{\Lambda}\Gamma_2$, where $\Gamma_i$ are treeable groups. Let $K\leq \Gamma_1$ and assume that $\Lambda$ and $K$ are common measure free factors of $\Gamma_1$. Also let $H\leq \Gamma_2$ be a measure free factor of $\Gamma_2$. Then $G$ is treeable, and $H$ and $K$ are common measure free factors of $G$.
\end{proposition}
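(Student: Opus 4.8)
The plan is to re-run, on the $\Gamma_1$-side, the coinduction argument underlying Theorem \ref{amalgam} (the one revisited for Proposition \ref{HNN} in the next subsection) while carrying an extra free factor along. Fix a free p.m.p. action $\Gamma_1 \acts X_1$ realizing that $\Lambda$ and $K$ are common measure free factors of $\Gamma_1$, so that $E_{\Gamma_1}^{X_1} = E_\Lambda^{X_1} * E_K^{X_1} * E_1'$ --- in particular $\Lambda$ is a measure free factor of $\Gamma_1$ --- and a free p.m.p. action $\Gamma_2 \acts X_2$ realizing that $H$ is a measure free factor of $\Gamma_2$, so that $E_{\Gamma_2}^{X_2} = E_H^{X_2} * E_2'$. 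Since $\Gamma_1$ and $\Gamma_2$ are treeable one may also assume $E_{\Gamma_i}^{X_i}$ is treeable: replacing $X_i$ by its product with a treeable free $\Gamma_i$-action projects class-bijectively onto $X_i$, so the displayed decompositions pull back unchanged, and a class-bijective extension of a treeable relation is treeable (lift a treeing). Running the coinduction construction on $X_1$ and $X_2$ then produces a free p.m.p. action $G \acts X$ of the amalgam, together with $\Gamma_i$-equivariant factor maps $p_i : X \to X_i$.

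By freeness the trace on $E_{\Gamma_1}^X$ of $(p_1\times p_1)^{-1}E_\Lambda^{X_1}$ is exactly $E_\Lambda^X$, and likewise $E_K^X$, $E_H^X$ are the traces of the preimages of $E_K^{X_1}$, $E_H^{X_2}$; writing $\widetilde E_j'$ for the trace of the preimage of $E_j'$, the decompositions pull back to $E_{\Gamma_1}^X = E_\Lambda^X * E_K^X * \widetilde E_1'$ and $E_{\Gamma_2}^X = E_H^X * \widetilde E_2'$. The core of the argument --- identical to Theorem \ref{amalgam}, only with the $\Lambda$-complementary part of $E_{\Gamma_1}^X$ presented as $E_K^X * \widetilde E_1'$ instead of as a single factor --- gives that $E_G^X$ is the free product of $E_{\Gamma_2}^X$ with this complementary part, so by associativity and commutativity of the free product
\[ E_G^X = \bigl(E_K^X * \widetilde E_1'\bigr) * E_{\Gamma_2}^X = E_H^X * E_K^X * \bigl(\widetilde E_1' * \widetilde E_2'\bigr). \]
With $E' = \widetilde E_1' * \widetilde E_2'$ this is exactly the assertion (Definition \ref{commonmff}) that $H$ and $K$ are common measure free factors of $G$. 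Finally each of $E_H^X, \widetilde E_2' \subset E_{\Gamma_2}^X$ and $E_K^X, \widetilde E_1' \subset E_{\Gamma_1}^X$ is treeable by Theorem \ref{subtree}, and superposing treeings of the factors of a free product of equivalence relations gives a treeing of $E_G^X$; hence $G$ is treeable.

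The one real difficulty is the coinduction construction of $G \acts X$ and the verification that $E_G^X$ splits as $\bigl(E_K^X * \widetilde E_1'\bigr) * E_{\Gamma_2}^X$, i.e. that the $\Gamma_1$- and $\Gamma_2$-sides meet only along $E_\Lambda^X$; this has no formal group-theoretic shortcut (the group $G$ is not a free product $(\cdots)*\Gamma_2$), but it is precisely what is proved for Theorem \ref{amalgam} and re-used for Proposition \ref{HNN}. The only point peculiar to the present refinement is that the hypothesis on $\Gamma_1$ is strengthened from ``$\Lambda$ is a measure free factor'' to ``$\Lambda$ and $K$ are common measure free factors'', which is exactly what puts $E_K^X$ --- the conclusion --- alongside $E_\Lambda^X$ as a free factor of $E_{\Gamma_1}^X$, so that it survives the unfolding of the amalgam.
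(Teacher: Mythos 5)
Your proof is correct and follows essentially the same route the paper takes: the paper gives no separate argument for Proposition \ref{amalgam+}, saying only that it is obtained by the same coinduction argument used for Proposition \ref{HNN}, and your write-up is a faithful expansion of exactly that plan (coinduce the two actions, carry $E_K^X$ along as an extra free factor of the $\Lambda$-complement of $E_{\Gamma_1}^X$, and regroup by associativity). Your identification of the one genuinely non-formal step --- the orthogonality of that complement against $E_{\Gamma_2}^X$ --- and your deferral of it to the argument behind Theorem \ref{amalgam} and Proposition \ref{HNN} matches the paper's own treatment.
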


\subsection{Induced and coinduced actions}\label{sec:ind}

In this section we present tools for extending an action of a subgroup to an action of the overgroup. Specifically, if we have groups $H\leq G$, and an action $H\acts X$, we wish to construct an action $G\acts Y$ that contains, in some sense, the action of $H$ on $X$. These tools are the {\em induced} and {\em coinduced} actions.

\begin{definition} Let $H\leq G$ be a subgroup and $H\acts X$ be an action. Define
\[ \mbox{\em CoInd}_H^G X = \{\psi:G \to X : \psi(gh^{-1}) = h\cdot \psi(g) \mbox{\ for } h\in H, g\in G \} \]
with the action of $G$ given by $g\cdot\psi(k) = \psi(g^{-1}k)$ for $g,k\in G$.
\end{definition}

The coinduced action satisfies the following general properties, which are easy to prove:

\begin{enumerate}
\item The map $p:\mbox{CoInd}_H^G X \to X$ taking $\psi$ to $\psi(1)$ is $H$-equivariant and surjective.
\item If $\{ g_i\}$ is a set of representatives of $G/H$, then the map \[ \mbox{CoInd}_H^G X \to X^{G/H}  \mbox{\ s.t. } \psi \to \{\psi(g_i)\}_{g_iH} \]
is a bijection.
\item Let $G$ act on $X^{G/H}$ as follows: if $g\in G$, $f \in X^{G/H}$ then put \[ (g\cdot f)_{g_iH} = hf_{g_jH} \mbox{\ where } g^{-1}g_i = g_jh^{-1} \mbox{\ for } h\in H \]
With respect to this action, the map defined in (2) is an equivariant isomorphism.
\item If $H\acts X$ is free, then so is $G\acts \mbox{CoInd}_H^G X$.
\item If $H\acts (X,\mu)$ is a p.m.p. Borel action, then $\mbox{CoInd}_H^G X$ can be given the product Borel structure and the product measure of $X^{G/H}$. The action $G\acts \mbox{CoInd}_H^G X$ is  Borel and p.m.p. 
\end{enumerate}

Some of the interesting properties of an action are preserved under coinduction, as the next lemma shows. It is the key for our applications of this construction.

\begin{lemma}\label{coind1} Let $H\leq G$, and $H\acts X$ be a free p.m.p. Borel action with a graphing $\Phi$ that generates $E_H^X$. Let $Y=\mbox{\em CoInd}_H^G X$. There exists a graphing $\hat \Phi$ that generates $E_H^Y$ and has $C(\hat \Phi) = C(\Phi)$. Moreover, if $\Phi$ is a treeing so is $\hat \Phi$.
\end{lemma}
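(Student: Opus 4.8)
The plan is to exploit the description of $Y = \mathrm{CoInd}_H^G X$ from property (2): fixing a set $\{g_i\}$ of representatives of $G/H$ with $g_0 = 1$, the evaluation map $\psi \mapsto (\psi(g_i))_{g_iH}$ identifies $Y$ with $X^{G/H}$, and under this identification the projection $p = p_0$ onto the coordinate $g_0H = H$ is the $H$-equivariant map $\psi \mapsto \psi(1)$ of property (1). The key observation is that $E_H^Y$ decomposes coordinate-wise: two functions $\psi,\psi' \in X^{G/H}$ are in the same $H$-orbit if and only if there is a single $h \in H$ with $\psi'_{g_iH} = h \cdot \psi_{g_iH}$ for \emph{all} $i$ simultaneously. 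So to build $\hat\Phi$ I would take each partial isomorphism $\varphi = \varphi_j \in \Phi$, defined on a Borel set $A_j \subset X$, and lift it to a partial isomorphism $\hat\varphi_j$ of $Y$ whose domain $\hat A_j = p_0^{-1}(A_j)$ consists of those $\psi$ with $\psi(1) \in A_j$, acting by $(\hat\varphi_j \psi)(g) = \varphi_j(\psi(g))$ — equivalently, applying $\varphi_j$ in every coordinate (this is well-defined on all coordinates because $\varphi_j$ is an $H$-equivariant partial map of $X$, being a piece of a graphing of $E_H^X$... more precisely, $\varphi_j$ need not be $H$-equivariant, so I must instead describe $\hat\varphi_j$ via the group element: on $\psi$ with $\psi(1) \in A_j$, there is $h \in H$ such that $\varphi_j(\psi(1)) = h\cdot\psi(1)$... this is the subtle point, see below).

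The main obstacle, and the step needing care, is precisely that a partial isomorphism $\varphi_j$ in a graphing of $E_H^X$ is \emph{not} given by a global group element: on different pieces of its domain it realizes the $H$-action by different elements $h \in H$. So I cannot simply "apply $\varphi_j$ in every coordinate." The fix is to first refine $\Phi$: since $E_H^X$ has countable classes and is generated by the free action $H \acts X$, by the Feldman–Moore / standard graphing argument each $\varphi_j$ can be written as a countable disjoint union $\varphi_j = \bigsqcup_{h \in H} \varphi_{j}|_{A_{j,h}}$ where $A_{j,h} = \{x \in A_j : \varphi_j(x) = h\cdot x\}$ is Borel, and then each restriction is literally "act by $h$." Replacing $\Phi$ by $\{\varphi_j|_{A_{j,h}}\}_{j,h}$ does not change the cost ($\sum_h \mu(A_{j,h}) = \mu(A_j)$) nor the tree-ness of the induced graph (subdividing a partial isomorphism into Borel pieces with disjoint domains does not alter the graph structure on classes — each class gets the same edges). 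So without loss of generality every $\varphi_j$ acts as a fixed $h_j \in H$. Now the lift is unambiguous: $\hat A_j = \{\psi \in Y : \psi(1) \in A_j\} = p_0^{-1}(A_j)$, and $\hat\varphi_j(\psi) = h_j \cdot \psi$ in the sense of the $G$-action on $Y$ (which restricts to the diagonal $H$-action under the $X^{G/H}$ picture). One checks $\hat\varphi_j$ is a Borel isomorphism onto $p_0^{-1}(B_{j})$ where $B_j = h_j \cdot A_j$, and it is measure preserving because $p_0$ pushes the product measure to $\mu$ and $h_j$ preserves $\mu$, hence $\mu_Y(\hat A_j) = \mu(A_j)$, giving $C(\hat\Phi) = \sum_{j} \mu_Y(\hat A_j) = \sum_j \mu(A_j) = C(\Phi)$.

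It remains to verify that $\hat\Phi$ generates $E_H^Y$ and that it is a treeing when $\Phi$ is. For generation: since $\Phi$ generates $E_H^X$, for $\mu$-a.e. $x$ and any $h \in H$ there is a $\Phi$-path from $x$ to $h\cdot x$; applying the corresponding sequence of $\hat\varphi_j^{\pm 1}$ to a $\psi$ with $\psi(1) = x$ realizes, in the first coordinate, exactly this path, and by the diagonal nature of the action it realizes the \emph{same} word $h$ in every coordinate, so it connects $\psi$ to $h\cdot\psi$; as $\psi(1)$ ranges over a full-measure subset of $X$ and $h$ over $H$, and since (property (5)) the product measure is used, this covers $E_H^Y$ up to null sets. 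For the treeing property: suppose $\Phi$ induces a tree on a.e. $H$-class in $X$. A cycle in the graph induced by $\hat\Phi$ on an $H$-class of $Y$, read in the first coordinate, gives a closed $\Phi$-path in the corresponding $H$-class of $X$; if the $\hat\Phi$-cycle is reduced (no backtracking) then so is its first-coordinate image — because $\hat\varphi_j$ followed by $\hat\varphi_j^{-1}$ in $Y$ projects to $\varphi_j \varphi_j^{-1}$ in $X$ and conversely a backtrack downstairs forces one upstairs, the map on edges being a bijection $\hat A_j \leftrightarrow A_j$ — so the image is a reduced closed path in a tree, hence trivial, hence the original cycle was trivial. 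Thus $\hat\Phi$ is a treeing. (One should note the null set where $H \acts X$ fails to be free, or where the a.e. class of $\Phi$ fails to be a tree, pulls back under $p_0$ to a null set of $Y$ since $p_0$ is measure preserving onto $(X,\mu)$, so all "a.e." statements survive.) $\qed$
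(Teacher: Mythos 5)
Your proof is correct and follows essentially the same route as the paper's: subdivide the domains so that each partial isomorphism acts by a fixed element $h_j\in H$, lift by setting $\hat A_j=p^{-1}(A_j)$ and $\hat\varphi_j(\psi)=h_j\cdot\psi$, and verify cost, generation and the treeing property by projecting back to $X$ via $p$ (using freeness of $H\acts X$). The paper's argument is the same, only stated more tersely.
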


{\em Proof:} Let $\Phi = \{\varphi_i:A_i \to B_i \}$. By subdividing the sets $A_i$, we can assume that $\varphi_i(x) = h_i\cdot x$ for $h_i\in H$ and all $x\in A_i$. Now define $\hat A_i = p^{-1}(A_i)$, $\hat B_i = p^{-1}(B_i)$ and $\hat \varphi_i(y)=h_i\cdot y$ for all $y\in\hat A_i$. Put $\hat \Phi = \{\hat \varphi_i \}$. Then $\hat \Phi$ generates $E_H^Y$ by equivariance of the map $p$ and freeness of the action $H\acts X$. Also, $C(\hat \Phi) = C(\Phi)$ by definition of the product measure. Finally, if $\Phi$ is a treeing, then $\hat \Phi$ must also be a treeing, since a non-trivial cycle in the graphing $\hat \Phi$ would project under $p$ to a non-trivial cycle of $\Phi$ in $X$.

$\Box$

\begin{lemma} \label{freeprod} Let $G=G_1*G_2$, and let $H_1, H_2$ be a measure free factors of $G_1, G_2$ respectively. Then $H_1$ and $H_2$ are common measure free factors of $G$.
\end{lemma}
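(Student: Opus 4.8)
The plan is to build, from actions witnessing that $H_1$ is a measure free factor of $G_1$ and that $H_2$ is a measure free factor of $G_2$, a single free p.m.p.\ action of $G = G_1 * G_2$ whose orbit relation splits as $E_{H_1}^Y * E_{H_2}^Y * E'$. First I would fix free p.m.p.\ Borel actions $G_1 \acts X_1$ and $G_2 \acts X_2$ together with Borel equivalence relations $R_1$ on $X_1$ and $R_2$ on $X_2$ so that $E_{G_1}^{X_1} = E_{H_1}^{X_1} * R_1$ and $E_{G_2}^{X_2} = E_{H_2}^{X_2} * R_2$. The issue is that these live on different spaces, so the natural move is to coinduce each action up to $G$: set $Y_1 = \mathrm{CoInd}_{G_1}^{G} X_1$ and $Y_2 = \mathrm{CoInd}_{G_2}^{G} X_2$, and let $Y = Y_1 \times Y_2$ with the diagonal $G$-action and the product measure. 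By property (4) of the coinduced action each factor action is free and p.m.p., so the diagonal action $G \acts Y$ is free and p.m.p.\ as well.

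Next I would identify the sub-relations on $Y$ that will serve as the free factors. On $Y_1$ the subgroup $G_1$ acts, and by the coinduction properties the relation $E_{G_1}^{Y_1}$ is, up to the product-with-$X_1^{G/G_1}$ picture, a ``copy'' of $E_{G_1}^{X_1}$; in particular it inherits the splitting $E_{G_1}^{Y_1} = E_{H_1}^{Y_1} * \hat R_1$ where $\hat R_1$ is the lift of $R_1$ — this is essentially the content of the argument in Lemma~\ref{coind1}, applied to a graphing of $R_1$ rather than a treeing, and to the pair $G_1 \leq G$ rather than $H \leq G$. Pulling everything back to $Y = Y_1 \times Y_2$ via the first projection, I get sub-relations $E_{H_1}^{Y}$ and a relation $\hat R_1'$ on $Y$ with $E_{G_1}^{Y} = E_{H_1}^{Y} * \hat R_1'$ for a.e.\ class; symmetrically on the $Y_2$ side I get $E_{G_2}^{Y} = E_{H_2}^{Y} * \hat R_2'$. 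Since $G = G_1 * G_2$ and the action on $Y$ is free, the remark following the definition of free product gives $E_{G}^{Y} = E_{G_1}^{Y} * E_{G_2}^{Y}$, and substituting the two splittings yields $E_{G}^{Y} = E_{H_1}^{Y} * E_{H_2}^{Y} * \hat R_1' * \hat R_2'$. Taking $E' = \hat R_1' * \hat R_2'$ (the sub-relation of $E_G^Y$ generated by $\hat R_1' \cup \hat R_2'$) finishes the construction, provided the orthogonality clauses all hold.

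The main obstacle is verifying clause (2) of the free-product definition: that on a full-measure set the relations $E_{H_1}^{Y}$, $E_{H_2}^{Y}$, $\hat R_1'$, $\hat R_2'$ are pairwise orthogonal (more precisely, that their union generates $E_G^Y$ as a genuine multi-factor free product). Orthogonality of $E_{G_1}^Y$ and $E_{G_2}^Y$ is clean because it comes from the free-product decomposition of $G$ and the freeness of the action — a nontrivial alternating cycle would read off a nontrivial word in $G_1 * G_2$ fixing a point. The orthogonality of $E_{H_1}^Y$ against $\hat R_1'$ (and likewise on the $2$-side) is inherited a.e.\ from the assumed splittings on $X_1$, $X_2$ via the coinduction/projection maps: a bad cycle on $Y$ projects to a bad cycle on $X_1$. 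The genuinely delicate point is cross terms like $E_{H_1}^Y$ versus $\hat R_2'$, or $\hat R_1'$ versus $\hat R_2'$ — here one must check that an alternating cycle forces a collapse. This again follows from the $G_1 * G_2$ structure: $\hat R_i'$ is contained in $E_{G_i}^Y$, so any alternating cycle among the four relations is in particular an alternating cycle for $E_{G_1}^Y$ and $E_{G_2}^Y$, hence collapses by their orthogonality, and then one runs the within-$G_i$ orthogonality to conclude. I would organize this as: first reduce to $E_{G_1}^Y \perp E_{G_2}^Y$ a.e., then refine each side using the measure-free-factor splitting pulled up from $X_i$. A minor bookkeeping point to be careful about is that ``$E' = \hat R_1' * \hat R_2'$'' should be stated as the relation generated by the union, and one should note that all these relations have countable classes so the a.e.\ statements are meaningful; none of this is hard, just needs to be said.
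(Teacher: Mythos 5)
Your proposal is correct and follows essentially the same route as the paper: coinduce each action up to $G$, form the product $Y = Y_1 \times Y_2$ with the diagonal action, lift the splittings $E_{G_i}^{X_i} = E_{H_i}^{X_i} * R_i$ via the graphing construction of Lemma~\ref{coind1}, and combine them using $E_G^Y = E_{G_1}^Y * E_{G_2}^Y$. The paper states this in four lines and leaves the orthogonality checks implicit; your more careful discussion of the cross-term orthogonality (reducing first to $E_{G_1}^Y \perp E_{G_2}^Y$ and then refining within each factor) is exactly the right way to fill in what the paper omits.
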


{\em Proof:}
For $i=1,2$, let $G_i\acts X_i$ be free p.m.p. Borel actions such that $E_{G_i}^{X_i} = E_{H_i}^{X_i}*E'_i$. Take $Y_i=\mbox{CoInd}_{G_i}^GX_i$, and $Y=Y_1\times Y_2$ with the diagonal action. Then we get $E_{G_i}^{Y} = E_{H_i}^{Y}*E''_i$, where $E''_i$ is generated by $\hat \Psi_i$, the graphing obtained as in Lemma \ref{coind1} from a graphing $\Psi_i$ of $E'_i$. Since $G=G_1*G_2$ we have \[ E_G^Y = E_{G_1}^Y*E_{G_2}^Y = E_{H_1}^Y*E''_1*E_{H_2}^Y*E''_2 \] which gives the lemma.

$\Box$

{\bf Proof of Proposition \ref{HNN}:}
Recall the notations of the proposition: $H,H',K\leq G$ are common measure free factors, $\alpha:H\to H'$ is an isomorphism and $\Gamma = G*_H$ is the corresponding HNN extension. 

Let $G\acts X$ be a free p.m.p. Borel action such that $E_G^X$ is treeable and $E_G^X = E_H^X*E_{H'}^X*E_K^X*E'$. This can be done by taking a diagonal action $G\acts X=X_0\times X_1$ where $E_G^{X_0}$ is treeable and $E_G^{X_1}$ splits as the desired free product. 

Let $Y = \mbox{CoInd}_G^{\Gamma} X $. Take treeings $\Phi_H$, $\Phi_K$ and $\Psi$ for $E_H^X$, $E_K^X$ and $E'$ respectively, which exist by \ref{subtree}, since $E_G^X$ is treeable. Consider $\hat \Phi_H$, $\hat \Phi_K$ and $\hat \Psi$ as in the previous Lemma \ref{coind1}, which are still treeings.  By the same argument as in \ref{coind1} (projecting by $p$), we can obtain that $E_G^Y=E_H^Y*E_{H'}^Y*E_K^Y*E''$ where $E''$ is the equivalence relation generated by $\hat \Psi$. Finally, consider the Borel automorphism of $Y$ given by the action of the stable letter $t$, which we still denote by $t$, i.e. $t:Y \to Y$ s.t. $t(y)=t\cdot y$.

Put $\Omega = \{ t\}\cup \hat \Phi_{H'} \cup \hat \Phi_K \cup \hat \Psi$, which is a graphing on $Y$. We claim that $\Omega$ is a treeing for $E_{\Gamma}^Y$.

It generates $E_{\Gamma}^Y$: It clearly generates $E_{H'}^Y$, $E_K^Y$ and $E''$. Also, since $E_H^Y = t^{-1}E_{H'}^Y t$, we see that $\Omega$ generates $E_G^Y$. Since $\Gamma$ is generated by $G$ and $t$, it is now clear that $\Omega$ generates $E_{\Gamma}^Y$.

There are no non-trivial cycles a.e.: Suppose $\phi_{i_1}\cdots \phi_{i_n}(y)=y$ for $y$ in a set of positive measure, $\phi_j\in\Omega$, so that $\phi_{i_1}\cdots \phi_{i_n}$ is a reduced word on $\Omega$. This gives rise to a reduced word $\gamma_{i_1}\cdots \gamma_{i_n}$ with letters in $G\cup\{t, t^{-1}\}$ representing $1$ in $\Gamma$. If all of its letters are in $G$, we get a contradiction, since $\hat \Phi_{H'} \cup \hat \Phi_K \cup \hat \Psi$ is a treeing of a subrelation of $E_G^Y$ (each one is a treeing, and they are mutually orthogonal since $E_G^Y=E_H^Y*E_{H'}^Y*E_K^Y*E''$). If some $\gamma_i$ equals $t$ or $t^{-1}$, that contradicts the normal form for an HNN extension, since the treeing $\hat \Phi_{H'} \cup \hat \Phi_K \cup \hat \Psi$ generates an equivalence relation which is orthogonal to $E_H^Y$, thus no element of $H$ appears in the normal form of $\gamma_{i_1}\cdots \gamma_{i_n}$. 

$\Box$


Now we turn to the induced action. In our context, it is only useful for extensions of finite index. However, it will be the main tool for the next section.

\begin{definition} Let $H\leq G$ be a subgroup and $H\acts X$ be an action. Define \[ \mbox{\em Ind}_H^G X = (X\times G)/H \] where the quotient is by the right diagonal action of $H$ ($h\cdot(x,g) = (h^{-1}\cdot x,gh)$). Let $G$ act on $X\times G$ by left multiplication on the second coordinate. This induces the action of $G$ on $\mbox{\em Ind}_H^G X$.
\end{definition}

These are the main properties of the induced action, whose proofs are straightforward:

\begin{enumerate}
\item $\mbox{Ind}_H^G X$ can be identified with $X\times G/H$.
\item If $\{ g_i\}$ is a set of representatives of $G/H$, then we can write $\mbox{Ind}_H^G X = X\times G/H =\bigcup_i(X\times\{g_iH\})$, which is a union of disjoint copies of $X$.
\item The inclussion $X \to X\times\{H\} \subset \mbox{Ind}_H^G X$ is $H$-equivariant. We call $X_0 = X \times \{H\}$.
\item If $H\acts X$ is p.m.p. and the index of $H$ in $G$ is finite, then the union measure (rescaled by the index of $H$ in $G$) is an invariant probability measure on $\mbox{Ind}_H^G X$.
\item $X_0$ is a complete section for the orbits of $G$ on $\mbox{Ind}_H^G X$. Its translates are of the form $X\times\{g_iH\} = g_iX_0$.
\item The restriction to $X_0$ of the orbit equivalence of $G$ on $\mbox{Ind}_H^G X$ is the orbit equivalence of $H$ on $X_0 =X$. In symbols \[ E_G^{\mbox{Ind}_H^G X}|_{X_0} = E_H^{X_0} \cong E_H^X \]
\end{enumerate}

\section{Lifting to finite covers}\label{sec:lift}

In this section we prove Theorem \ref{lift}, which is the main tool in the proofs of our results. Our general goal is to show that some element $w\in G$ of a treeable group $G$ generates a measure free factor of $G$. This result will allow us to pass to a finite index subgroup $H$ of $G$, replacing $\langle w\rangle$ by a suitable subgroup of $H$. First we explain what this suitable subgroup is, starting from the geometric viewpoint.

Let $G$ be a group and $H\leq G$ a subgroup of finite index $n=[G:H]$. Consider a complex $X_G$ with $\pi_1(X_G) = G$, and the $n$-sheeted covering space $X_H \to X_G$ corresponding to $H\leq G$. Recall that the conjugacy classes in $G$ correspond to the homotopy classes of closed curves in $X_G$.

Let $w\in G$ and take a closed curve $\gamma$ in $X_G$ that represents the conjugacy class of $w$. The pre-image of $\gamma$ in $X_H$ is a union of closed curves $\gamma_1,\ldots,\gamma_k$, where $\gamma_i$ is the union of $m_i$ lifts of $\gamma$. In that sense, $\gamma_i$ ``covers" $\gamma$ with index $m_i$. 
Each of these curves $\gamma_i$ defines a conjugacy class $[w_i]$ in $H$, where we can write $w_i = g_i^{-1}w^{m_i}g_i$ for some $g_i\in G$. 

More explicitly, let $p_0\in X_G$, $p\in X_H$ be the basepoints (with $p$ projecting to $p_0$), and let $p_i$ be a pre-image of $p_0$ so that $\gamma_i$ can be obtained as the lift of $\gamma^{m_i}$ starting at $p_i$. Take a curve $\alpha_i$ in $X_H$ going from $p$ to $p_i$, and let $g_i^{-1}\in G=\pi_1(X_G,p_0)$ be the homotopy class of the projection of $\alpha_i$. Then we obtain $[\alpha_i \gamma_i \alpha_i^{-1}] = g_i^{-1}w^{m_i}g_i$. 

Notice that the choice of $\alpha_i$ corresponds to the choice of the representative $g_i$ in the coset $g_iH$, which also corresponds to the choice of $w_i$ as representative of its conjugacy class in $H$. On the other hand, the choice of $p_i$ corresponds to the choice of $g_i$ in the right coset $\langle w \rangle g_i$. Thus the $g_i$ form a set of representatives of the double cosets in $\langle w\rangle \backslash G/H$.

This motivates the following definitions.
 
\begin{definition} Suppose $H\leq G$ is a subgroup of finite index, and $w\in G$. 
\begin{itemize}
\item[(a)] If $g\in G$, let $m(g)$ be the minimum $t$ such that $g^{-1}w^tg\in H$. We will say that the element $g^{-1}w^{m(g)}g$ is the {\em lift} of $w$ to $H$ with respect to $g$.
\item[(b)] Let $\{g_1,\ldots,g_k\}$ be a set of representatives of the double cosets in $\langle w\rangle \backslash G/H$. Put $m_i=m(g_i)$, and $w_i = g_i^{-1}w^{m_i}g_i$. Then we say that the set $\{w_1,\ldots,w_k\}$ is a {\em complete lift} of $w$ to $H$.
\end{itemize}
\end{definition}

This definition of lifts follows the one used by J. Manning (Definition 1.4 in \cite{manning}), with the difference that here we have different complete lifts for the various choices of double coset representatives, instead of defining it as a set of conjugacy classes. It bears this same relationship with the more general notion of {\em elevations}, introduced by D. Wise \cite{wise}.

\begin{theorem}\label{lift} Let $G$ be a treeable group of finite cost. Let $w\in G$, and $H\leq G$ be a subgroup of finite index $n=[G:H]$. Take   $\{g_1,\ldots,g_k\}$, a set of representatives of the double cosets in $\langle w\rangle \backslash G/H$, and let $K=\langle w_1,\ldots,w_k \rangle \leq H$ be the subgroup generated by the corresponding complete lift of $w$ to $H$. Assume that
\begin{enumerate}
\item $K$ is free of rank $k$, i.e. $w_1,\ldots,w_k$ is a free basis of $K$.
\item $K$ is a measure free factor of $H$.
\end{enumerate}
Then $\langle w \rangle$ is a measure free factor of $G$
\end{theorem}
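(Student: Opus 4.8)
The plan is to exploit the machinery of induced actions from subsection~\ref{sec:ind} together with the complete-section formula for cost. Starting from the subgroup $H \le G$ of index $n$, I would take a free p.m.p. Borel action $H \acts X$ witnessing that $K = \langle w_1,\ldots,w_k\rangle$ is a measure free factor of $H$, so that $E_H^X = E_K^X * E'$ for some Borel equivalence relation $E'$; moreover, since $G$ is treeable of finite cost, I may (by taking a diagonal product with the restriction of a treeable action of $G$ to $H$, as in the proof of Proposition~\ref{HNN}) assume in addition that $E_H^X$ is treeable. Now form $Y = \mathrm{Ind}_H^G X$ with its induced p.m.p. action $G \acts Y$, and recall from property~(6) of the induced action that $X_0 = X \times \{H\} \subset Y$ is a complete section with $E_G^Y|_{X_0} = E_H^{X_0} \cong E_H^X$. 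The idea is that the ``lift'' $K$ is precisely the subgroup that tracks how powers of $w$ re-enter $H$ along the cover, so the $\langle w \rangle$-orbit relation on $Y$ should restrict, on $X_0$, to something controlled by $E_K^X$.

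The key computation is to identify $E_{\langle w\rangle}^Y|_{X_0}$. Using the double-coset representatives $g_1,\ldots,g_k$, one decomposes $X_0$ and its $\langle w\rangle$-translates inside $Y$ along the cosets $g_i H$, and checks — this is the heart of the matter, and the place where hypothesis~(1) that $K$ is free of rank $k$ with the $w_i$ as a free basis gets used — that the orbit equivalence relation generated on $X_0$ by the single transformation $y \mapsto w \cdot y$ (intersected back with $X_0$) is exactly $E_K^{X_0}$, and that the natural graphing coming from $w$ restricts to a treeing of $E_K^{X_0}$ matching a free basis. Concretely: a cycle in the $w$-graphing on $Y$ that returns to $X_0$ reads off, via the $g_i$, a word in the $w_i$ that must be trivial in $G$, hence (by freeness of $K$) trivial as a word, so the induced graph structure on classes of $E_{\langle w\rangle}^Y \cap (X_0\times X_0)$ is a forest whose edges correspond to the free generators $w_i$. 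Combined with $E_H^X = E_K^X * E'$, this should give, on $X_0$, that $E_H^{X_0}$ and hence (after translating by the $g_i$) the relevant restriction of $E_G^Y$ splits as a free product with $E_{\langle w\rangle}^Y$ as one factor.

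From there I would run a cost/treeing argument to conclude globally on $Y$. Using the complete-section formula $C(E_G^Y) = C_{\mu|_{X_0}}(E_G^Y|_{X_0}) + \mu(Y\setminus X_0)$ and the analogous formula applied to the subrelation generated by $\langle w\rangle$ and a treeing of $E'$ pushed up appropriately, together with Theorem~\ref{tree1} (a graphing of finite cost is a treeing iff its cost equals the cost of the relation it generates), I would build an explicit treeing of $E_G^Y$ of the form (treeing of $E_{\langle w\rangle}^Y$) $\sqcup$ (treeing coming from $E'$ on $X_0$, induced up) $\sqcup$ (a transversal graphing connecting the $n$ sheets $g_i X_0$, contributing $\mu(Y\setminus X_0)$ to the cost). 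The orthogonality needed for the free-product decomposition $E_G^Y = E_{\langle w\rangle}^Y * E''$ would follow because any reduced cycle alternating between the $\langle w\rangle$-edges and the other edges projects, via the covering structure $Y \to \mathrm{Ind}$ and the identification on $X_0$, to a reduced cycle contradicting $E_H^X = E_K^X * E'$ — exactly the projection-of-cycles technique used in Lemmas~\ref{coind1} and~\ref{freeprod} and in the proof of Proposition~\ref{HNN}.

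The main obstacle I expect is the bookkeeping in the second step: correctly matching the double-coset representatives $g_i$ to the sheets of $Y$ and verifying that the transformation induced by $w$ on $X_0$, when restricted and re-expressed in $H$, produces precisely the transformations $x \mapsto w_i \cdot x$ (not merely conjugates or powers thereof) and nothing more — i.e. that the geometric picture of ``$\gamma_i$ covers $\gamma$ with index $m_i$'' is faithfully reproduced at the level of Borel partial isomorphisms. Once that dictionary is nailed down, freeness of $K$ (hypothesis~(1)) converts ``no relations among the $w_i$ in $G$'' into ``no nontrivial cycles in the induced graphing,'' and hypothesis~(2) supplies the complementary free factor; the treeing/cost packaging is then routine given Theorems~\ref{subtree}, \ref{tree0}, \ref{tree1}, \ref{tree2} and the complete-section formula.
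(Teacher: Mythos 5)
Your overall architecture --- induce the action from $H$ up to $G$, use $X_0$ as a complete section, and certify an explicit graphing as a treeing via the complete-section formula and Theorem~\ref{tree1} --- is exactly the strategy of the paper's proof. But the step you yourself flag as ``the heart of the matter'' is carried out incorrectly, and the error propagates into your cost count. The first-return relation of the single transformation $y\mapsto w\cdot y$ to $X_0=X\times\{H\}$ is \emph{not} $E_K^{X_0}$: a point of $X_0$ returns to $X_0$ under $w^t$ exactly when $w^t\in H$, i.e.\ when $m_1$ divides $t$ (taking $g_1=1$), so $w$ alone induces on $X_0$ only the orbit relation of the cyclic subgroup $\langle w_1\rangle=\langle w^{m_1}\rangle$. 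The remaining generators $w_i=g_i^{-1}w^{m_i}g_i$ for $i\ge 2$ are not reachable this way; to realize them you must adjoin the partial isomorphisms $\phi_i\colon X_0\to g_iX_0$, $\phi_i(x)=g_i\cdot x$, and use the identity $w_i\cdot x=\phi_i^{-1}w^{m_i}\phi_i(x)$. That is the actual dictionary, and it is where the double-coset structure enters: $w$ permutes the $n$ sheets $X\times\{gH\}$ in $k$ cycles of lengths $m_1,\dots,m_k$, one per double coset, and the $\phi_i$ are precisely what is needed to join these $k$ cycles into a tree.

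This also breaks your cost accounting. Your proposed treeing, $\{w\}$ together with a treeing of $E'$ and a transversal graphing contributing $\mu(Y\setminus X_0)$, would cost $\mu(Y)+(n-1)\mu(X_0)+C(E')=(2n-1)\mu(X_0)+C(E')$, whereas the complete-section formula together with $C(E_H^X)=k\mu(X_0)+C(E')$ (this, not a cycle-cancellation argument, is where hypothesis (1) is really used) gives $C(E_G^Y)=(n+k-1)\mu(X_0)+C(E')$. These agree only when every $m_i=1$; otherwise your graphing has strictly larger cost than the relation it generates and cannot be a treeing --- indeed $w$ already connects the sheets within each $w$-cycle, so $n-1$ additional transversal maps would create genuine cycles. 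The correct graphing is $\Phi=\{w,\phi_2,\dots,\phi_k\}\cup\Phi'$, of cost $\mu(Y)+(k-1)\mu(X_0)+C(\Phi')=(n+k-1)\mu(X_0)+C(E')$, which matches $C(E_G^Y)$ and is therefore a treeing by Theorem~\ref{tree1}; the decomposition $E_G^Y=E_{\langle w\rangle}^Y*E''$ then follows because $\{w\}$ is a subfamily of a treeing. So the route is the right one, but the missing ingredient is the family $\phi_2,\dots,\phi_k$ and the resulting count of $k-1$ (not $n-1$) transversal maps.
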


{\em Proof:}
Let $H\acts X$ be a free p.m.p. Borel action that realizes $K$ as a measure free factor of $H$, i.e. $E_H^X = E_K^X*E'$, and that is treeable. This is possible by taking the direct product of actions satisfying each condition. Consider the induced action of $G$ on $Y=\mbox{Ind}_H^G X = X\times G/H$, and put $X_i = X\times \{g_iH \}$. Assume $g_1\in H$, so $X_1=X_0$ is the standard embedding of $X$ into $\mbox{Ind}_H^G X$. 
 
We can also assume that $g_1=1$: replacing $g_i$ by $g_ig_1^{-1}$ changes $K$ into $g_1Kg_1^{-1}$, which satisfies the same hypotheses.

Define $\phi_i:X_0 \to X_i$ by $\phi_i(x) = g_i\cdot x$. Also take $\Phi'$ a treeing for $E'$, which exists since $E_H^X$ is treeable (here we identify $X_0=X$). Consider the graphing $\Phi = \{w,\phi_2,\ldots,\phi_k\}\cup \Phi'$. We will show that $\Phi$ is a treeing of $E_G^Y$, which implies the result.

To see that it generates $E_G^Y$, recall that $X_0$ is a complete section, and notice that every translate of it is of the form $w^tg_iX_0$ (since the $g_i$ are a set of representatives of the double cosets in $\langle w\rangle \backslash G/H$, then the $w^tg_iH$ cover all the cosets of $G/H$). We get that $w^tg_iX_0 = w^t X_i = w^t \phi_i(X_0)$, so we only need to show that $\Phi$ generates the restriction $E_G^Y|_{X_0} = E_H^{X_0}$. Take $x\in X_0$. Clearly every element of $\Phi'$ is defined on $x$. On the other hand \[ w_i\cdot x = g_i^{-1}w^{m_i}g_i\cdot x = \phi_i^{-1}w^{m_i}\phi_i(x) \] which is defined, since $x\in X_0$ and $w^{m_i}g_i\cdot x \in X_i$ (noticing that $X_i = g_iX_0 = g_ig_i^{-1}w^{m_i}g_iX_0 = w^{m_i}g_i X_0$).
Since $\Phi'$ and $w_1,\ldots,w_n$ generate $E_H^X$, then $\Phi$ generates $E_H^{X_0}$. So $\Phi$ generates $E_G^Y$.

Now, on one hand \[ C(E_G^Y) = (n-1)\mu(X) + C(E_G^Y|_{X_0}) = (n-1)\mu(X) + C(E_H^X) \] by the formula for a complete section, so \[ C(E_G^Y) = (n-1)\mu(X) + C(E') + k\mu(X) = (n+k-1)\mu(X) + C(E') \] since $K$ is free of rank $k$. On the other hand 
\[ C(\Phi) = \mu(Y) + (k-1)\mu(X) + C(\Phi') = (n+k-1)\mu(X) + C(E') \] Thus, by Theorem \ref{tree1}, $\Phi$ is a treeing of $E_G^Y$.

$\Box$ 

\section{Curves on surfaces} \label{sec:surfaces}

Here we extend the results of D. Gaboriau on measure free factors of surface groups.

\begin{lemma} \label{surface3} Let $S$ be a surface with boundary, and $\gamma_1,\ldots,\gamma_k$ be its boundary curves. If the genus of $S$ is at least $1$, then the boundary subgroup $ \langle [\gamma_1],\ldots,[\gamma_k]\rangle \leq \pi_1(S)$ is a measure free factor of $\pi_1(S)$.
\end{lemma}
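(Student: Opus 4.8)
The plan is to treat the orientable and non-orientable cases separately. If $S$ is orientable, the statement is exactly Corollary \ref{surface2}, so there is nothing to do; assume from now on that $S$ is non-orientable, with $g$ crosscaps and $k$ boundary curves $\gamma_1,\ldots,\gamma_k$. Fix a standard presentation $\pi_1(S)=\langle x_1,\ldots,x_g\rangle * \langle c_1,\ldots,c_{k-1}\rangle$ in which $c_i=[\gamma_i]$ for $i<k$ and $[\gamma_k]=(x_1^2\cdots x_g^2\,c_1\cdots c_{k-1})^{-1}$. A direct manipulation in this free group shows that the boundary subgroup $B=\langle[\gamma_1],\ldots,[\gamma_k]\rangle$ equals $\langle c_1,\ldots,c_{k-1}\rangle * \langle w\rangle$, where $w=x_1^2x_2^2\cdots x_g^2$; in particular $B$ is free of rank $k$.

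The first step is to reduce to the case of a single boundary curve. Suppose we know that $\langle w\rangle$ is a measure free factor of $F_g=\langle x_1,\ldots,x_g\rangle$. Applying Lemma \ref{freeprod} to the free splitting $\pi_1(S)=F_g * \langle c_1,\ldots,c_{k-1}\rangle$ — with $\langle w\rangle$ a measure free factor of the first factor and $\langle c_1,\ldots,c_{k-1}\rangle$ a (trivial) measure free factor of the second — we get that $\langle w\rangle$ and $\langle c_1,\ldots,c_{k-1}\rangle$ are common measure free factors of $\pi_1(S)$. Since the subgroup they generate is the free product $\langle w\rangle * \langle c_1,\ldots,c_{k-1}\rangle = B$, this means $E_{\pi_1(S)}^Y=E_B^Y*E''$ for the relevant action, i.e.\ $B$ is a measure free factor of $\pi_1(S)$, as desired.

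It remains to prove that $w=x_1^2\cdots x_g^2$ generates a measure free factor of $F_g$, and here I would invoke Theorem \ref{lift}. Let $S'$ be the one-holed non-orientable surface with $\pi_1(S')=F_g$ and boundary represented by $w$, let $\tilde S\to S'$ be the orientable double cover, and put $H=\pi_1(\tilde S)\leq F_g$, a subgroup of index $2$. The key geometric point is the description of the complete lift of $w$ to $H$: the boundary curve of $S'$ is two-sided, so its preimage in $\tilde S$ is the pair of boundary curves of $\tilde S$, each covering it with multiplicity one; hence $\langle w\rangle\backslash F_g/H$ has exactly two double cosets, with $m_1=m_2=1$, and one checks that the associated complete lift $\{w_1,w_2\}$ is a free basis of the boundary subgroup of $\tilde S$. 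Now $\tilde S$ is an orientable surface with two boundary curves and genus $g-1$, which the genus hypothesis on $S$ keeps at least $1$; so $\langle w_1,w_2\rangle$ is free of rank $2$ (hypothesis (1) of Theorem \ref{lift}) and is a measure free factor of $H$ by Corollary \ref{surface2} (hypothesis (2)). Theorem \ref{lift} then yields that $\langle w\rangle$ is a measure free factor of $F_g$, which finishes the argument.

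I expect the main obstacle to be the geometric bookkeeping in the last step: verifying that the preimage of $\partial S'$ under the orientable double cover is precisely $\partial\tilde S$, with two components each of multiplicity one, and that a set of double-coset representatives can be chosen so that the resulting complete lift of $w$ is literally a free basis of the standard boundary subgroup of $\tilde S$, rather than merely conjugate to it. Everything else is either a direct appeal to Corollary \ref{surface2} and Theorem \ref{lift}, or the routine free-product computation behind Lemma \ref{freeprod}. Alternatively, one could skip the reduction to one boundary curve and apply the evident extension of Theorem \ref{lift} to the finitely generated subgroup $B$ itself, using the orientable double cover of $S$, whose boundary subgroup is then the complete lift of $B$; the bookkeeping involved is the same.
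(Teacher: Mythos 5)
Your proof is correct and follows essentially the same route as the paper: the orientable case is Corollary \ref{surface2}, the non-orientable one-boundary case is handled by the orientation double cover together with Theorem \ref{lift}, and the case of several boundary components reduces to it via the splitting $\pi_1(S)=F_g*\langle c_1,\ldots,c_{k-1}\rangle$ and Lemma \ref{freeprod}. The only differences are the order of the reduction steps and that you spell out the double-coset bookkeeping for the double cover, which the paper leaves implicit (note that both arguments tacitly need the double cover to have positive genus, i.e.\ they read ``genus at least $1$'' so as to exclude the M\"obius band).
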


{\em Proof:}
When $S$ is orientable, this is Gaboriau's Theorem \ref{surface2}. If $S$ is non-orientable with one boundary component, take the orientable double cover $\hat S$. By the former case, the boundary components of $\hat S$ generate a measure free factor of $\pi_1(\hat S)$. On the other hand, the boundary curves of $\hat S$ are the lifts of the boundary curve of $S$ to this two-sheeted cover. So Theorem \ref{lift} applies.

In case there is more than one boundary component, write \[ \pi_1(S) = \langle a_1,\ldots,a_g,c_1\ldots,c_{k-1} \rangle\] where the boundary classes are $[\gamma_1] = a_1^2\cdots a_g^2 c_1\cdots c_{k-1}$, and $[\gamma_j] = c_{j-1}$ for $j>1$. We just showed that $a_1^2\cdots a_g^2$ generates a measure free factor of the subgroup $\langle a_1,\ldots,a_g \rangle$. Since \[ \pi_1(S) = \langle a_1,\ldots,a_g \rangle * \langle c_1\ldots,c_{k-1} \rangle \] we get that the boundary subgroup $\langle a_1^2\cdots a_g^2,c_1,\ldots,c_{k-1} \rangle$ is a measure free factor.
$\Box$

\begin{proposition} \label{surface4} Let $S$ be a surface with boundary, and $\gamma_1,\ldots,\gamma_k$ be its boundary curves. Suppose that $\alpha = \{ \alpha_1,\ldots,\alpha_n \}$ is a family of disjoint essential simple closed curves on $S$, and $S_1,\ldots,S_t$ are the components of $S$ cut along $\alpha$. If $S_j$ has genus at least $1$ for every $j$, then the subgroup \[ \langle [\gamma_1],\ldots,[\gamma_k],[\alpha_1],\ldots,[\alpha_n]\rangle \leq \pi_1(S) \] is a measure free factor of $\pi_1(S)$. 

\end{proposition}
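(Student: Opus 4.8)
The plan is to read off a graph-of-groups decomposition of $\pi_1(S)$ from cutting $S$ along the multicurve $\alpha$, and then to rebuild $\pi_1(S)$ by iterated amalgamated products and HNN extensions, propagating measure free factors at each stage with Theorem \ref{amalgam}, Proposition \ref{amalgam+} and Proposition \ref{HNN}. Write $\hat S = S_1\sqcup\cdots\sqcup S_t$ for $S$ cut along $\alpha$. A regular neighbourhood of each $\alpha_i$ is an annulus, and the boundary of $\hat S$ consists of the curves $\gamma_1,\dots,\gamma_k$ together with these annular copies of the $\alpha_i$; in each $S_j$ some of the $\gamma_l$ and some of the $\alpha_i$-copies occur as boundary curves. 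Since $S_j$ has genus at least $1$, Lemma \ref{surface3} shows the boundary subgroup $B_j\le\pi_1(S_j)$ is a measure free factor of $\pi_1(S_j)$; and, exactly as in the computation in the proof of Lemma \ref{surface3}, $B_j$ is the internal free product of the cyclic groups generated by the boundary curves of $S_j$. Hence each such cyclic subgroup is, together with all the others, a common measure free factor of $\pi_1(S_j)$. Applying Lemma \ref{freeprod} repeatedly to $\pi_1(\hat S) = \pi_1(S_1)*\cdots*\pi_1(S_t)$, all of these cyclic boundary-curve subgroups, over all $S_j$ simultaneously, are common measure free factors of the finitely generated free group $\pi_1(\hat S)$.

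Now rebuild $\pi_1(S)$ by re-gluing the curves $\alpha_i$ one at a time. Let $\mathcal G$ be the dual graph of the decomposition --- a vertex for each $S_j$, an edge (possibly a loop) for each $\alpha_i$ --- which is connected because $S$ is. Re-glue first the curves indexed by the edges of a spanning tree of $\mathcal G$, in an order under which the glued-up region stays connected; each such move identifies a boundary curve of the current connected surface with a boundary curve of a fresh component $S_j$, so it is an amalgamated product over the cyclic subgroup $\langle[\alpha_i]\rangle$, which by the previous step is a common measure free factor of each of the two factors. Then re-glue the remaining curves one at a time: the surface is already connected, so each such move identifies two distinct boundary curves of the current surface and is an HNN extension whose associated subgroups are the two cyclic subgroups carried by these two boundary copies. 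After the final move we have $\pi_1(S)$.

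Throughout, I maintain the invariant that the current group is treeable of finite cost and that the cyclic subgroups generated by the $[\gamma_l]$ and by chosen representatives of the curves $[\alpha_i]$ re-glued so far are, all together, common measure free factors of it. At an amalgamation step the cyclic subgroups coming from the two sides are carried along by Theorem \ref{amalgam} and Proposition \ref{amalgam+}, while the amalgamating subgroup survives by the remark after Theorem \ref{amalgam} that it is a measure free factor of the amalgam when it is one of both factors; we take this amalgamating subgroup as the representative of $[\alpha_i]$. At an HNN step, Proposition \ref{HNN} retains all the other cyclic subgroups and also the range subgroup $H'$ of the HNN extension, which we take as the representative of $[\alpha_i]$ (the domain subgroup $H$ is absorbed into the extension, but $H'=tHt^{-1}$ is conjugate to it, hence still represents the conjugacy class of $\alpha_i$). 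When the procedure terminates we have a free p.m.p. action of $\pi_1(S)$ in which $E_{\pi_1(S)}^X$ decomposes as a free product of the relations $E_{\langle[\gamma_l]\rangle}^X$, the relations $E_{\langle[\alpha_i]\rangle}^X$, and one further relation $E'$; grouping the first $k+n$ factors --- which together generate $E_H^X$ for $H=\langle[\gamma_1],\dots,[\gamma_k],[\alpha_1],\dots,[\alpha_n]\rangle\cong F_{k+n}$ --- exhibits $H$ as a measure free factor of $\pi_1(S)$.

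The main obstacle is that Theorem \ref{amalgam}, Proposition \ref{amalgam+} and Proposition \ref{HNN}, as stated, preserve only a single measure free factor (or a single pair of common ones) in their conclusions, whereas the induction above needs the whole collection of cyclic subgroups --- including each amalgamating subgroup and each HNN range subgroup --- to survive as common measure free factors at once. These strengthenings follow from the very same constructions: coinduce the given actions of the vertex groups to the larger group and assemble a treeing compatible with the free-product structure, just as in the proof of Proposition \ref{HNN} above; the only point requiring attention is that the treeings chosen for the various subrelations on the vertex groups can be taken pairwise orthogonal and adapted to the normal forms of the amalgam or HNN extension, which is immediate from the definitions. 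The remaining matters are routine: fixing basepoints and connecting arcs so that the cyclic subgroups produced are the intended $\langle[\gamma_l]\rangle$ and $\langle[\alpha_i]\rangle$ up to conjugacy (using that conjugates of measure free factors are measure free factors), the elementary combinatorics of $\mathcal G$ and of the gluing order, the observation that every group occurring is a finitely generated free group hence treeable of finite cost, and --- should one-sided curves be permitted --- a brief separate argument at the Möbius-band neighbourhoods.
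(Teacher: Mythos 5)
Your proposal is correct and takes essentially the same route as the paper: cut $S$ along $\alpha$, apply Lemma \ref{surface3} to each component so that the cyclic subgroups generated by its boundary curves become common measure free factors (the boundary subgroup being their free product), combine across components with Lemma \ref{freeprod}, and reassemble $\pi_1(S)$ by amalgamations and HNN extensions via \ref{amalgam+} and \ref{HNN} --- implicitly in their multi-factor forms, a point you rightly make explicit and which the paper leaves to the remark that Definition \ref{commonmff} extends to several subgroups. The only step you leave as a placeholder is the one-sided curves: the paper cuts along those first, observes that each $[\alpha_i^2]$ is then a boundary curve of the cut surface, and recovers $\pi_1(S)$ by amalgamating $\langle[\alpha_i]\rangle\cong\Z$ over $\langle[\alpha_i^2]\rangle$, concluding with Proposition \ref{amalgam+}.
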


{\em Proof:}
Consider the graph of groups $\Gamma$ induced by cutting $S$ along $\alpha$. For simplicity, first consider the case where all the curves in $\alpha$ are two sided. Then there is one vertex for each component $S_j$, and one edge for each curve $\alpha_i$. The vertex groups are $\pi_1(S_j)$, and the edge groups are $\langle [\alpha_i] \rangle$. Let $\alpha_i^{\pm}$ be the sides of a tubular neighborhood of $\alpha_i$. Then $[\alpha_i^{\pm}]$ are the images of $[\alpha_i]$ in the adjacent vertex groups. 
Since each $S_j$ has genus at least one,  Lemma \ref{surface3} gives that the boundary subgroup of $\pi_1(S_j)$ is a measure free factor of $\pi_1(S_j)$. This boundary subgroup is freely generated by the $[\alpha_i^{\pm}]$ and the $[\gamma_p]$ that lie in $S_j$.
Now $\pi_1(S)$ is obtained as an iteration of amalgamated products and HNN extensions of the vertex groups $\pi_1(S)$. Each such extension gives the identification \[ [\alpha_i] = [\alpha_i^-] \mbox{\ \ \ } [\alpha_i] = t_i[\alpha_i^+]t_i^{-1} \] where $t_i=1$ in the case of an amalgamation, and is a new generator in the case of an HNN extension. Then Proposition \ref{HNN} gives the proposition.

For the general case, let $\beta=\{\alpha_1,\ldots,\alpha_m\}$ be the curves of $\alpha$ that are cores of M\"obius bands in $S$. Cut $S$ along $\beta$, forming $S|_{\beta}$, and apply the former case to it. Notice that $[\alpha_i^2]$ for $i\leq m$ are boundary curves of $S|_{\beta}$. On the other hand, $\pi_1(S)$ is obtained from $\pi_1(S|_{\beta})$ by amalgamating $\langle [\alpha_i]\rangle$ along $[\alpha_i^2]$ for $i\leq m$. Then the result is obtained by \ref{amalgam+}.

$\Box$



\section{Cyclic measure free factors of $F_n$} \label{sec:mff-new}

In this section we apply the  results of the last section to find some cyclic measure free factors of free groups.

\begin{theorem} \label{bswords} Let $F=\langle x,y_1,\ldots,y_k \rangle$ be a free group of rank $k+1$. Then an element of the form \[ w=xy_1x^{m_1}y_1^{-1}y_2x^{m_2}y_2^{-1}\cdots y_kx^{m_k}y_k^{-1} \] generates a measure free factor of $F$.
\end{theorem}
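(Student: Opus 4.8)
The plan is to deduce the statement from Theorem~\ref{lift} combined with the results of Section~\ref{sec:surfaces}. Since $F$ is free it is treeable of finite cost, so the hypotheses of Theorem~\ref{lift} on the ambient group are automatic; everything reduces to producing a finite-index subgroup $H\leq F$ and a complete lift $\{w_1,\ldots,w_r\}$ of $w$ to $H$ such that (1) $w_1,\ldots,w_r$ freely generate a subgroup $K\leq H$ of rank $r$, and (2) $K$ is a measure free factor of $H$. One may first assume all $m_i\neq 0$, since an index $i$ with $m_i=0$ makes the $i$-th syllable $y_ix^{0}y_i^{-1}$ disappear from $w$, reducing $k$.

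For $H$ I would take the kernel of a homomorphism $\theta\colon F\to\Z/N$ with $\theta(x)=1$ and $\theta(y_i)=0$, where $N$ is a large multiple of $M:=1+m_1+\cdots+m_k$ (any large $N$ if $M=0$). Sending $x$ to an element of large order serves to ``unwrap'' the powers $x^{m_i}$: the double cosets in $\langle w\rangle\backslash F/H$ are represented by $x^{0},x^{1},\ldots,x^{r-1}$ with $r=\gcd(N,M)$, each $g_j=x^j$ has $m(g_j)=N/r$, and the lift is $w_j=x^{-j}w^{N/r}x^{j}$. A Reidemeister--Schreier computation, with Schreier transversal $\{1,x,\ldots,x^{N-1}\}$, puts $H$ on the free basis consisting of the conjugates $x^{j}y_ix^{-j}$ ($0\le j<N$) together with the element $x^{N}$, and rewrites each $w_j$ as an explicit word in this basis. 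One then verifies that, after an automorphism of $H$, these words are exactly the boundary curves of a compact surface $\tilde S$ with $\pi_1(\tilde S)=H$ --- equivalently, that suitable conjugates of the $w_j$ have product equal to a product of commutators and cross-cap squares formed from the remaining basis elements. The arithmetic should force $\tilde S$ to be non-orientable in general, which is precisely why Section~\ref{sec:surfaces} extended Gaboriau's surface results to the non-orientable case; and taking $N$ large guarantees that $\tilde S$ has positive genus. Lemma~\ref{surface3} then gives that the boundary subgroup $K$ of $\tilde S$ is a measure free factor of $H$, and, being the boundary subgroup of a positive-genus surface, $K$ is free of rank $r$. Theorem~\ref{lift} now yields that $\langle w\rangle$ is a measure free factor of $F$.

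The main obstacle is the surface-topological bookkeeping in the middle step: choosing $N$ (and perhaps replacing $\Z/N$ by a product of cyclic groups, so as to separate the conjugators $y_1,\ldots,y_k$) so that the $\gcd$ governing the number $r$ of lifted curves, the Euler characteristic of the cover, and the homology classes of the $w_j$ all cooperate to present $\{w_1,\ldots,w_r\}$ as the full boundary-curve system of a surface every component of which has positive genus --- in particular has no Möbius-band component, where the conclusion of Lemma~\ref{surface3} fails. A further point requiring care is to check that the words produced by Reidemeister--Schreier can be simultaneously realized as disjoint simple closed curves; if some of them must instead appear as essential interior curves, one invokes Proposition~\ref{surface4} in place of Lemma~\ref{surface3}, still concluding that $K$ is a measure free factor of $H$ that is free of the required rank.
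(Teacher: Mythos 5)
Your reduction to Theorem \ref{lift} is the right opening move, and your computation of the double cosets of $\langle w\rangle\backslash F/H$ and of $m(g_j)$ for $H=\ker\theta$ is correct. But the step you describe as ``surface-topological bookkeeping'' --- that after Reidemeister--Schreier rewriting the complete lift $\{w_1,\ldots,w_r\}$ becomes, up to an automorphism of $H$, the full boundary-curve system of a compact surface $\tilde S$ with $\pi_1(\tilde S)=H$ --- is the entire mathematical content of your argument, it is left unverified, and for your choice of $H$ it is false in general. Concretely: in the Schreier basis $\{x^jy_ix^{-j}\}\cup\{x^N\}$ of $H=\ker\theta$, the exponent sum of the generator $x^N$ in any $u\in H$ equals the total $x$-exponent of $u$ as a word in $F$, divided by $N$; since $w_j=x^{-j}w^{N/r}x^j$ has total $x$-exponent $\pm N$, every class $[w_j]\in H^{\mathrm{ab}}$ has $x^N$-coordinate $\pm 1$. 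For the boundary classes $\partial_1,\ldots,\partial_b$ of any compact surface one has $\sum_i\epsilon_i[\partial_i]\in 2H_1(\tilde S)$ for suitable signs ($0$ in the orientable case, twice the sum of the crosscap classes otherwise), whereas $\sum_j\epsilon_j[w_j]$ has $x^N$-coordinate of the same parity as $r=|M|$. So whenever $M=1+m_1+\cdots+m_k$ is odd --- already for $w=xy_1x^2y_1^{-1}$ --- the lifts cannot be the boundary system of \emph{any} surface with fundamental group $H$, orientable or not. Falling back on Proposition \ref{surface4} does not repair this: that proposition forces every boundary curve of the surface into the measure free factor, so you would no longer be exhibiting $K=\langle w_1,\ldots,w_r\rangle$ itself as a factor of rank $r$ as Theorem \ref{lift} requires. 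And even when the parity cooperates, realizability of a word system as embedded curves inducing an isomorphism on $\pi_1$ (``geometricity'') is a genuinely delicate property --- precisely the subject of Manning's paper \cite{manning} cited here --- and nothing in your choice of $N$ addresses it.

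The paper sidesteps this by never asking a finite-index subgroup of $F$ to be a surface group relative to the lifts. It writes $F$ as a multiple HNN extension of the $(k+2)$-punctured-sphere group $H=\langle x,c_1,\ldots,c_k\rangle$ with $c_j=y_jx^{m_j}y_j^{-1}$, so that $w=xc_1\cdots c_k$ is one boundary component of the sphere piece; it then takes a degree-$p$ cover of the corresponding complex ($p$ coprime to the $m_j$ and to $k+1$), in which the sphere piece acquires positive genus and Lemma \ref{surface3} applies to its boundary subgroup, and it propagates the measure-free-factor property across the stable letters via Proposition \ref{HNN}. Only then is Theorem \ref{lift} invoked, with the complete lift being the single element $w^p$. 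If you want to salvage your route you would have to build an analogous graph-of-groups structure into your finite-index subgroup rather than demanding that it be a surface group outright; as written, the surface-realization step is a gap that cannot be closed.
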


{\em Proof:}
Let $H=\langle x,c_1,\ldots,c_k\rangle$ be another free group on $k+1$ generators, and put $v=xc_1\cdots c_k$. Then we can obtain $F$ as an HNN extension: \[ F = \langle H,y_1,\ldots,y_k | c_j = y_jx^{m_j}y_j^{-1} \mbox{, for } j=1,\ldots,k \rangle \]
The standard inclussion $H \hookrightarrow F$ maps $v$ to $w$, so we may regard $w$ as an element of $H$.
A natural complex with fundamental group $H$ is a $k+2$-punctured sphere $S$, whose boundary components represent $x,c_1,\ldots,c_k$ and $w=v=xc_1\cdots c_k$. We will identify these boundary curves with their representatives in $H=\pi_1(S)$. Starting from $S$, we can build a complex with fundamental group $F$ by attaching cylinders $C_i$, glued to $S$ by their boundary curves. More explicitely, one of the boundary components of $C_i$ is identified with $c_i$, and the other is glued to $x$ by an attaching map of degree $m_i$, so it represents $x^{m_i}$ in the fundamental group. We call this complex $X$. Notice that $w$ is the only boundary component of $S$ that was not attached to a cylinder in $X$.

Next we construct a finite cover $\hat S \to S$, proceeding as follows: Let $p>0$ be an integer with $(p,m_j)=1$ for all $j=1,\ldots,k$ and $(p,k+1)=1$ (e.g. $p$ prime, large enough). Let $\hat H$ be the kernel of the morphism $H\to \Z_p$ sending the generators $x,c_1,\ldots,c_k$ to $1$, and let $\hat S$ be the cover corresponding to $\hat H$. Notice that it has index $p$, and its boundary components are represented by $\hat x=x^p$, $\hat c_j = c_j^p$ and $\hat w = w^p$. This is because the images of $x$, $c_j$ and $w$ in $\Z_p$ have order $p$. By these reasons we obtain that $\hat w =w^p$ is a complete lift of $w$ to $\hat H$. 
Now, $\hat S$ is a surface of positive genus, so we may apply Lemma \ref{surface3} to conclude that its boundary subgroup (which is freely generated by $\hat x,\hat c_1,\ldots,\hat c_k,\hat w$), is a measure free factor of $\hat H$.

On the other hand, we will extend $\hat S$ to a $p$-sheeted cover $\hat X \to X$. Consider the standard $p$-sheeted cover $\hat C_i \to C_i$ of each cylinder $C_i$, and glue each of its boundary components to $\hat S$ along the curves $\hat c_i$ and $\hat x^{m_i}$ respectively. Since $(p,m_i)=1$, the covering maps $\hat S \to S$ and $\hat C_i \to C_i$ agree on the glued boundaries, so they give rise to a covering map $\hat X \to X$.

\begin{figure}[htbp]
\centering
\includegraphics[width=0.75\textwidth]{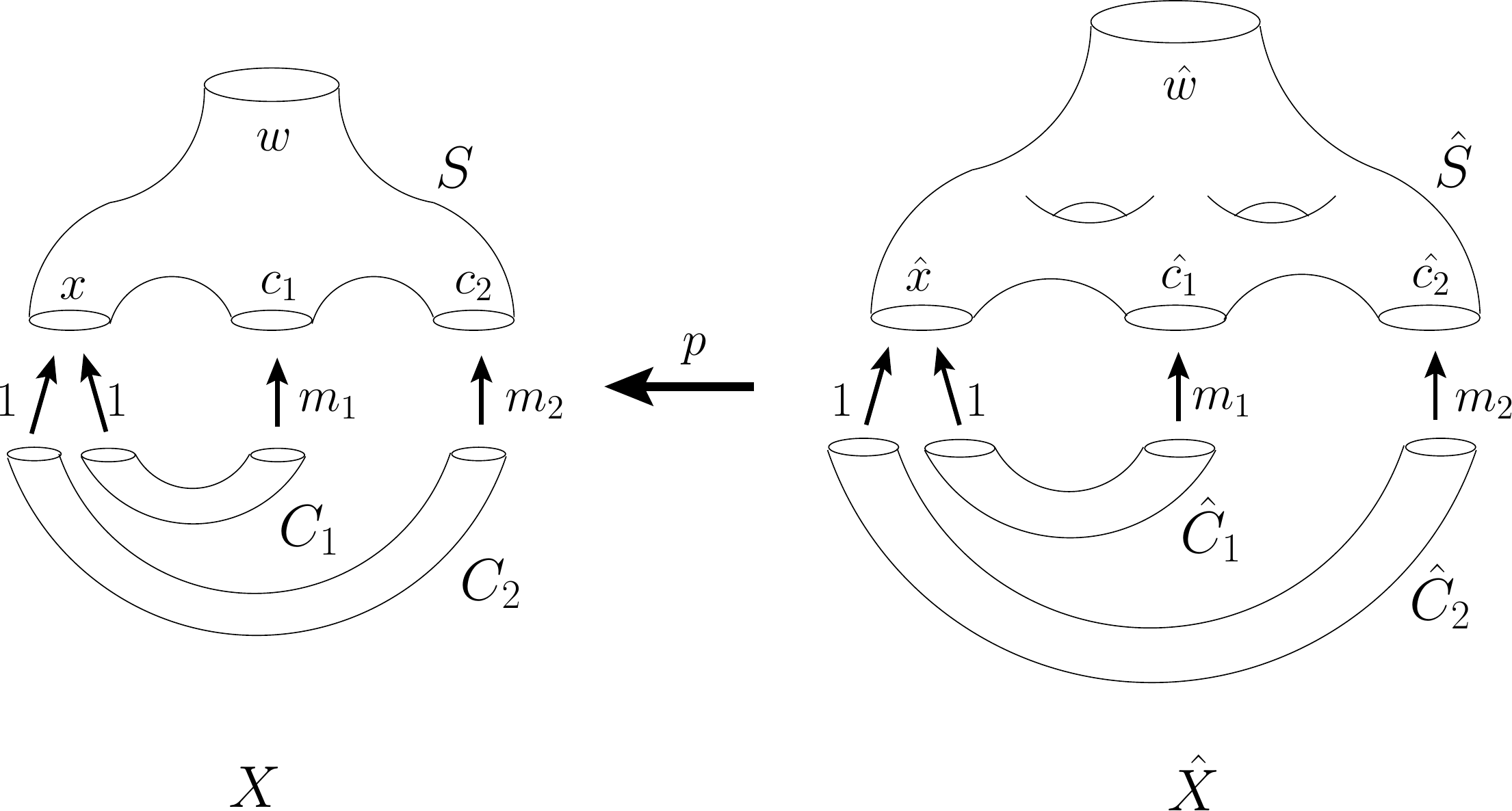}
\caption{{\bf Left:} Sketch of the complex $X$ for $k=2$. The gluing maps are labeled by their degrees. {\bf Right:} The $p$-sheeted cover $\hat X$, sketched in the same fashion.}
\end{figure}

Let $\hat F$ be the index $p$ subgroup of $F$ corresponding to the cover $\hat X$. Then $\hat F \cap H = \hat H$, and $\hat w =w^p$ is also a complete lift of $w$ to $\hat F$. By our construction of $\hat X$, we can write $\hat F$ as an HNN extension: 

\[ \hat F = \langle \hat H,l_1,\ldots,l_k | \hat c_j = l_j\hat x^{m_j} l_j^{-1} \mbox{, for } j=1,\ldots,k \rangle\]

Now by Proposition \ref{HNN}, $\hat w$ still generates a measure free factor in $\hat F$, and by Theorem \ref{lift}, we get that $w$ generates a measure free factor of $F$.
$\Box$

\begin{theorem} \label{twoletters} Let $G = F_2 = \langle a,b \rangle$. Then an element of the form $w=a^kb^n$ for $k,m \neq 0$ generates a measure free factor of $G$.
\end{theorem}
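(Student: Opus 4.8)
\emph{Proof plan.} After replacing $a$, $b$ by inverses if necessary (an automorphism of $G$, hence preserving measure free factors) we may assume $k,n>0$. The strategy mirrors that of Theorem \ref{bswords}: realize $w$ as a boundary curve of a genus-$0$ surface sitting inside a $2$-complex with fundamental group $G$, pass to a finite cover on which the corresponding surface acquires positive genus, invoke Lemma \ref{surface3}, descend via Proposition \ref{amalgam+}, and finally come back down to $G$ using Theorem \ref{lift}. Concretely, let $S$ be a pair of pants with $\pi_1(S)=\langle c,d\rangle$ and boundary curves $c$, $d$, $w:=cd$, and build a complex $X$ by gluing to $S$ a cylinder $C_1$ joining the $c$-boundary to a circle $\ell_a$ by a degree-$k$ map and a cylinder $C_2$ joining the $d$-boundary to a circle $\ell_b$ by a degree-$n$ map. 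Then $\pi_1(X)=\langle c,d\rangle *_{\,c=a^k}\langle a\rangle *_{\,d=b^n}\langle b\rangle\cong\langle a,b\rangle=G$, and under this identification $w=a^kb^n$.

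Now choose an odd prime $p$ with $p\nmid kn$ and let $\phi\colon G\to\Z_p$ send $a\mapsto k^{-1}$, $b\mapsto n^{-1}$; then $\phi(c)=\phi(d)=1$ and $\phi(w)=2$ has order $p$. Let $\hat F=\ker\phi$ (index $p$) and let $\hat X\to X$ be the corresponding cover. The covering-space bookkeeping gives: the preimage $\hat S$ of $S$ is a connected orientable surface (the image of $\pi_1(S)$ in $\Z_p$ is everything) with Euler characteristic $-p$ and exactly three boundary circles $\hat c=c^p$, $\hat d=d^p$, $\hat w=w^p$ (each of $c,d,w$ has order-$p$ image, so its boundary circle lifts to a single circle), hence genus $(p-1)/2\ge 1$; and each cylinder lifts to a single cylinder, attaching the relevant lifted curve with the same degree $k$ (resp.\ $n$). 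Thus $\hat F=\pi_1(\hat X)=\hat H *_{\,\hat c=\hat a^{k}}\langle\hat a\rangle *_{\,\hat d=\hat b^{n}}\langle\hat b\rangle$, where $\hat H=\pi_1(\hat S)$.

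By Lemma \ref{surface3} the boundary subgroup $\langle\hat c,\hat d,\hat w\rangle\le\hat H$, which is free of rank $3$, is a measure free factor of $\hat H$; hence $\langle\hat c\rangle$ and $\langle\hat d,\hat w\rangle$ (and likewise all three of $\langle\hat c\rangle,\langle\hat d\rangle,\langle\hat w\rangle$) are common measure free factors of $\hat H$. Now apply Proposition \ref{amalgam+} twice. First, to $\hat H *_{\,\langle\hat c\rangle=\langle\hat a^{k}\rangle}\langle\hat a\rangle$: both factors are treeable ($\hat H$ is free and $\langle\hat a\rangle\cong\Z$), the amalgamated subgroup $\langle\hat c\rangle$ is a measure free factor of $\hat H$ common with $\langle\hat d,\hat w\rangle$, and we require nothing from the $\Z$-factor; the conclusion is that this amalgam is treeable and $\langle\hat d,\hat w\rangle$ (hence $\langle\hat d\rangle$ and $\langle\hat w\rangle$) remain common measure free factors of it. Amalgamating $\langle\hat b\rangle\cong\Z$ over $\langle\hat d\rangle=\langle\hat b^{n}\rangle$ and applying Proposition \ref{amalgam+} again, we obtain that $\hat F$ is treeable and $\langle\hat w\rangle=\langle w^p\rangle$ is a measure free factor of $\hat F$.

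Finally apply Theorem \ref{lift} with $G=F_2$ (treeable, of finite cost $2$), the element $w=a^kb^n$, and the finite-index subgroup $H=\hat F$. Since $\hat F$ is normal in $G$ and $\langle w\rangle$ surjects onto $G/\hat F\cong\Z_p$, there is a single double coset in $\langle w\rangle\backslash G/\hat F$ and the least $t$ with $w^t\in\hat F$ is $p$, so the corresponding complete lift of $w$ to $\hat F$ is the singleton $\{w^p\}$; the subgroup $K=\langle w^p\rangle$ is free of rank $1$ and, up to conjugacy in $\hat F$ (irrelevant for being a measure free factor), equals $\langle\hat w\rangle$, which was just shown to be a measure free factor of $\hat F$. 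Both hypotheses of Theorem \ref{lift} then hold, so $\langle w\rangle$ is a measure free factor of $G$. The main obstacle in carrying this out is the covering-space bookkeeping: one must verify that $\hat X$ really is the graph of spaces described — that under $\phi$ the three boundary circles of $S$ and the two loops $\ell_a,\ell_b$ each lift to a single circle, that the lifted cylinders attach with the asserted degrees, and that $\hat S$ has positive genus — so that Lemma \ref{surface3} applies, and one must confirm that the complete lift of $w$ is exactly $\{w^p\}$. Once this is in place the descent through Proposition \ref{amalgam+} and Theorem \ref{lift} is mechanical; note in particular that we never need $k\Z\le\Z$ to be a measure free factor, only the $\hat H$-side boundary curve $\langle\hat c\rangle$.
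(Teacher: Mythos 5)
Your argument is correct, but it is not the paper's proof. The paper deduces Theorem \ref{twoletters} from Theorem \ref{bswords}: it passes to the index-$kn$ cover of the rose determined by $a\mapsto n$, $b\mapsto 1$ in $\Z_{kn}$, computes an explicit complete lift $\{w_0,\dots,w_{n-1}\}$ of $w$, and rewrites each $w_t$ in a suitable free basis so that it becomes a word of the form covered by Theorem \ref{bswords}; Lemma \ref{freeprod} and Theorem \ref{lift} then finish. You instead bypass Theorem \ref{bswords} entirely and run, directly on $a^kb^n$, the same surface-plus-cylinders strategy that the paper uses to prove Theorem \ref{bswords} itself: realize $w$ as the third boundary curve of a pair of pants inside a complex for $F_2$, pass to a single index-$p$ cover ($p$ an odd prime with $p\nmid kn$) where the pair of pants acquires genus $(p-1)/2\ge 1$ so that Lemma \ref{surface3} applies, and descend via Proposition \ref{amalgam+} (amalgamations, since your cylinders run to separate circles, where the proof of \ref{bswords} uses Proposition \ref{HNN} for its HNN cylinders). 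The steps check out: $\phi(c)=\phi(d)=1$ and $\phi(w)=2$ all have order $p$, so $\hat S$ is connected with exactly three boundary circles and the complete lift of $w$ is the singleton $\{w^p\}$; the boundary subgroup of $\hat S$ is free of rank $3$, so the three cyclic boundary subgroups are common measure free factors and the two applications of \ref{amalgam+} (with the trivial subgroup on each $\Z$ side) are legitimate; and the conjugacy ambiguity between $\hat w$ and $w^p$ is harmless since measure free factors are preserved by automorphisms. You are also right that the orientation of \ref{amalgam+} matters, since $k\Z$ is not a measure free factor of $\Z$ for $k>1$. What your route buys is economy and uniformity: one prime-degree cover instead of an index-$kn$ cover followed by the whole \ref{bswords} machinery, no case distinction for $k=1$ or $n=1$, and an argument that visibly extends to words such as $a_1^{k_1}\cdots a_m^{k_m}$ in $F_m$. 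What the paper's route exhibits is the explicit combinatorial relation between the two families of words, which your proof does not need.
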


{\em Proof:}
It is enough to consider the case when $k,n>0$, for one can apply the automorphism taking $a$, $b$ to $a^{\mbox{sign}(k)}$, $b^{\mbox{sign}(n)}$.
Also, we can assume $k,n>1$, for if either one equals $1$, then $w$ is already a free factor.

Let $\Gamma$ be the rose on two petals, labeled by $a$ and $b$ respectively. So $\pi_1(\Gamma)=G$. Consider the graph $\hat \Gamma$ constructed as follows: It has $kn$ vertices, $v_j$ for $j\in\Z_{kn}$. As for the edges, for each $j\in\Z_{kn}$ there is an oriented edge labeled by $b$ going from $v_j$ to $v_{j+1}$, and an oriented edge labeled by $a$ going from $v_j$ to $v_{j+n}$. The orientations and labeling of the edges give a projection map $\hat \Gamma \to \Gamma$, which is a covering of index $kn$. Let $H = \pi_1(\hat \Gamma,v_0)$ be the corresponding subgroup of $G$.

\begin{figure}[htbp]
\centering
\includegraphics[width=0.75\textwidth]{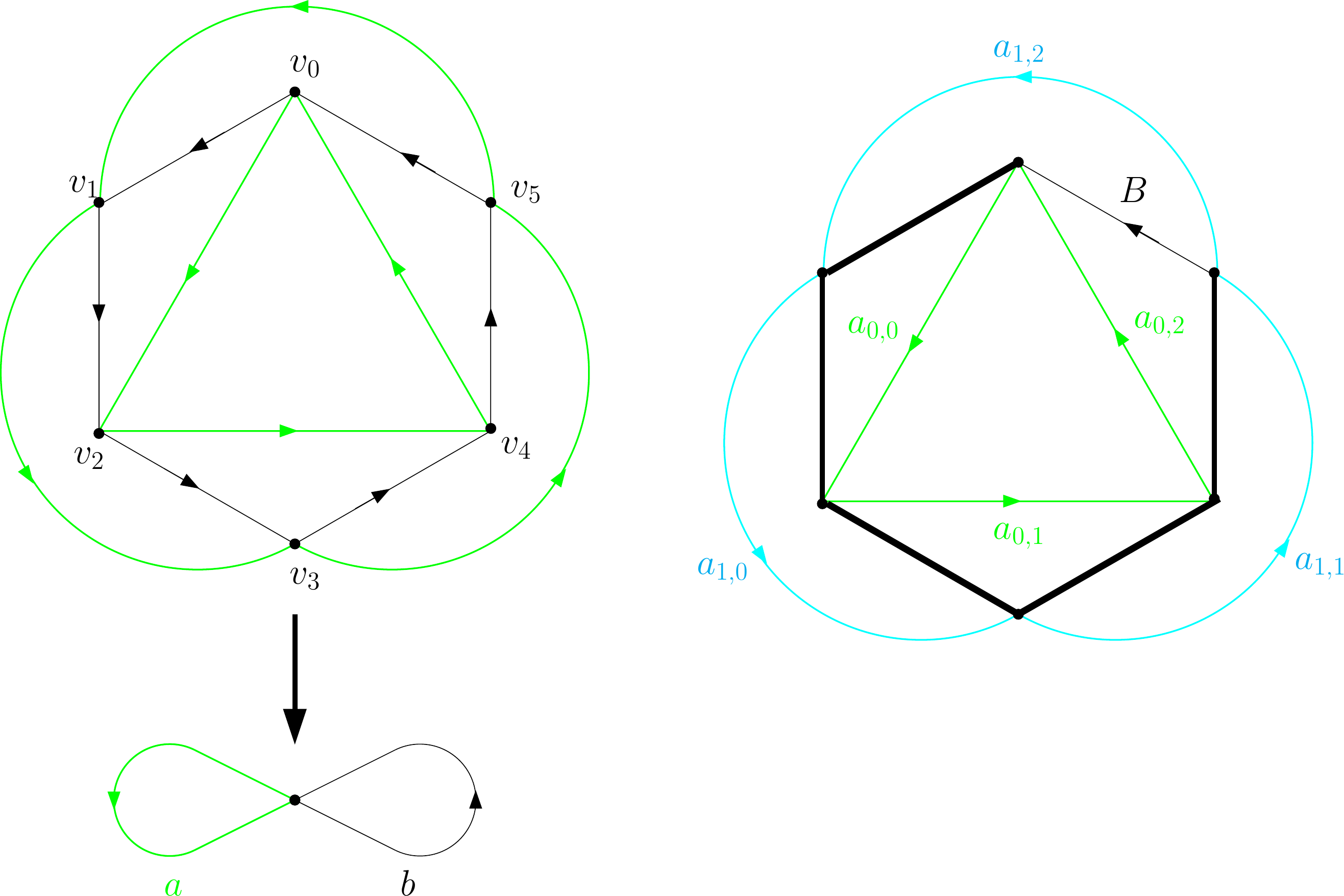}
\caption{{\bf Left:} An example of the cover $\hat \Gamma$ for $k=3,n=2$. {\bf Right:} The generators for $H$, where the bold edges form the spanning tree.}
\end{figure}

Consider the lift to $\hat \Gamma$ of a curve representing $w$ in $\Gamma$. If it starts at $v_j \in \hat \Gamma$, then it ends at $v_{j+n}$. This is because the lift of $a^k$ is closed, ending in $v_j$, and the lift of $b^n$ that starts at $v_j$ ends at $v_{j+n}$. Thus $w^k$ is the minimal power of $w$ whose lift from $v_j$ is a closed curve in $\hat \Gamma$. This holds true for any $j$, as the cover is normal.

Let $t=0,\ldots,n-1$ be a set of representatives of the cosets of $n\Z_{kn}\cong \Z_k$ in $\Z_{kn}$. Let $\gamma_j$ be the lift of $w^k$ starting from $v_t$, and let $w_t = [\beta_t \gamma_t \beta_t^{-1}]$ where $\beta_t$ is the lift of $b^t$ starting at $v_0$. Then $w_0,\ldots,w_{n-1}$ are a complete lift of $w$ to $H$.  

Now we will write the $w_t$ in a suitable basis for $H$.
Let $T$ be the spanning tree of $\hat \Gamma$ consisting of all the edges labeled by $b$ except $(v_{kn-1},v_{kn})$. Then the edges not in $T$ correspond to a free basis of $H$. Name them as follows: $B$ will stand for the generator corresponding to $(v_{kn-1},v_{kn})$, and $a_{t,i}$, for $t=0,\ldots,n-1$ and $i\in\Z_k$, will stand for the edge going from the vertex $v_{t+ni}$ to the vertex $v_{t+n(i+1)}$. This covers all edges not in $T$, observing that the edges labeled by $a$ are arranged in $n$ cycles of length $k$, each spanning a coset of $n\Z_{kn}$ in $\Z_{kn}$. 

Writing $w_t$ in this basis, we get \[w_t = (a_{t,0}\cdots a_{t,k-2} a_{t,k-1})(a_{t,1}\cdots a_{t,k-1} a_{t,0}) \cdots (a_{t,k-1} a_{t, 0}\cdots a_{t,k-2})B \]
Let $A_t = a_{t,0}\cdots a_{t,k-1}$. Then $w_t$ is the product of all cyclic conjugates of $A_t$ and $B$. 

Let $y_{t,i} = (a_{t,0}\cdot a_{i-1})^{-1}$. Then $B$, $A_t$, $y_{t,i}$, for $t=0,\ldots,n-1$ and $i=0,\ldots,k-2$ also forms a free basis of $H$. In this new basis, $w_t$ reads as
\[ w_t = A_t x_{t,0}A_t x_{t,0}^{-1}\cdots x_{t,k-2} A_t x_{t,k-2}^{-1} B \]
Using the previous result \ref{bswords}, we know that $v_t=A_t x_{t,0}A_t x_{t,0}^{-1}\cdots x_{t,k-2} A_t x_{t,k-2}^{-1}$ is a measure free factor of $H_t = \langle A_t,x_{t,i} \mbox{\ for } i=0,\ldots,k-2\rangle = \langle a_{t,i} \mbox{\ for }i=0,\ldots k-1 \rangle$. Now 
\[ H = H_0*\cdots *H_{n-1}*\langle B\rangle \]
So by Lemma \ref{freeprod} the subgroup $M=\langle v_0,\ldots,v_{k-1},B\rangle$ is a measure free factor of $H$, and the given basis generates it freely. Since $w_t = v_t B$, then $w_0,\ldots w_{k-1}, B$ is also a free basis of $M$. So $K=\langle w_0,\ldots, w_{n-1} \rangle$ is a measure free factor of $H$, freely generated by a complete lift of $w$. Theorem \ref{lift} finishes the proof.
$\Box$

Considering the conjugates of the words of the form $a^kb^n$ gives the following.

\begin{corollary} An element of the form $w=a^kb^na^p$ with $k \neq -p$, $n\neq 0$, generates a measure free factor of $F_2=\langle a,b\rangle$.
\end{corollary}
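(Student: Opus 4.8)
The plan is to deduce this directly from Theorem \ref{twoletters} together with the fact, noted after Definition \ref{mffdef}, that the image of a measure free factor under an automorphism of the ambient group is again a measure free factor. Given $w = a^kb^na^p$ with $n \neq 0$ and $k \neq -p$, the first step is to absorb the trailing $a^p$ into the leading power by conjugation: since
\[ a^{-p}\left(a^{k+p}b^n\right)a^p = a^kb^na^p = w, \]
the cyclic subgroup $\langle w\rangle$ equals $a^{-p}\langle a^{k+p}b^n\rangle a^p$, i.e. it is a conjugate of $\langle a^{k+p}b^n\rangle$ in $F_2$.

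The second step is to check that $\langle a^{k+p}b^n\rangle$ falls under the hypotheses of Theorem \ref{twoletters}: its generator has the form $a^{k'}b^{n}$ with $k' = k+p$, and $k \neq -p$ forces $k' \neq 0$ while $n \neq 0$ by assumption, so both exponents are nonzero. Hence Theorem \ref{twoletters} gives that $\langle a^{k+p}b^n\rangle$ is a measure free factor of $F_2$. The third step is then to apply the automorphism-invariance remark to the inner automorphism given by conjugation by $a^{-p}$, which carries $\langle a^{k+p}b^n\rangle$ to $\langle w\rangle$; therefore $\langle w\rangle$ is a measure free factor of $F_2$.

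There is essentially no obstacle here — the argument is a one-line reduction — and the only point requiring (trivial) attention is verifying that the shifted exponent pair $(k+p,\,n)$ is genuinely admissible, which is exactly what the hypotheses $k \neq -p$ and $n \neq 0$ encode. One can also note in passing that these conditions are sharp: if $k = -p$ then $w$ is conjugate to $b^n$, which is a proper power when $|n|\ge 2$ and hence cannot generate a measure free factor by Gaboriau's obstruction, while if $n = 0$ then $w$ is merely a power of $a$.
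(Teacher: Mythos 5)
Your proof is correct and is exactly the paper's intended argument: the paper derives this corollary by conjugating the words $a^{k+p}b^n$ covered by Theorem \ref{twoletters} and invoking the invariance of measure free factors under (inner) automorphisms. The verification that $k+p\neq 0$ and $n\neq 0$ is precisely the content of the stated hypotheses, as you note.
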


Observe that these are exactly all the $3$--letter words (i.e. of the form $a^kb^na^p$) that are not a proper power. 

\section{Measure free factors of virtually free groups} \label{sec:mff-vfree}

Finally, we consider measure free factors of the virtually free groups that are free products of free groups and finite cyclic groups.

\begin{theorem} \label{vfree} Let $G = \langle a_1,\ldots,a_n,s_1,\ldots,s_k | s_1^{n_1}=1,\ldots, s_k^{n_k}=1 \rangle \cong F_n*\Z_{n_1}*\cdots *\Z_{n_k}$. If $v\in F_n$ generates a measure free factor of $F_n$, then $w=vs_1^{p_1}\cdots s_k^{p_k}$ generates a measure free factor of $G$ for any $p_1,\ldots,p_k$.
\end{theorem}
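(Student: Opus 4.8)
The plan is to establish the following claim $(\star)$ and then run an induction on $k$: \emph{if $u$ generates a measure free factor of a treeable group $L$ of finite cost and $\Z_m=\langle s\rangle$ is finite cyclic, then for every $p$ the element $us^p$ generates a measure free factor of $L*\Z_m$.} Granting $(\star)$, the theorem follows by induction on $k$: for $k=0$ it is the hypothesis (with $L=F_n$, which is treeable of finite cost), and for $k\geq1$ we apply $(\star)$ with $L=F_n*\Z_{n_1}*\cdots*\Z_{n_{k-1}}$ --- treeable of finite cost by Theorem \ref{amalgam} (with trivial edge group, iterated) and finite generation --- with $u=vs_1^{p_1}\cdots s_{k-1}^{p_{k-1}}$, which generates a measure free factor of $L$ by the inductive hypothesis, and with $\Z_m=\Z_{n_k}$, $p=p_k$. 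We assume throughout that $v\neq1$, so that $u\neq1$ at every stage of the induction.

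To prove $(\star)$, set $G=L*\Z_m$ and let $\rho\colon G\to\Z_m$ be the retraction onto the second factor; put $H=\ker\rho$, a normal subgroup of index $m$. By the Kurosh subgroup theorem, $H$ is the free product $H=L^{(0)}*L^{(1)}*\cdots*L^{(m-1)}$ of the $m$ conjugates $L^{(c)}:=s^cLs^{-c}$ (a short Euler characteristic computation shows there is no additional free part). Write $d=\gcd(m,p)$ and $m'=m/d$; then $\rho(us^p)$ has order $m'$, so $(us^p)^{m'}$ is the least power of $us^p$ lying in $H$, and since $s^{m'p}=1$ one computes
\[ (us^p)^{m'}=u^{(0)}\,u^{(p)}\,u^{(2p)}\cdots u^{((m'-1)p)}\in H, \]
where $u^{(c)}:=s^cus^{-c}\in L^{(c)}$ and the exponents are read modulo $m$; as $j$ runs over $0,\dots,m'-1$ the residues $jp\bmod m$ are exactly the multiples of $d$ in $\Z_m$, each occurring once. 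Taking $\{1,s,\dots,s^{d-1}\}$ for representatives of the double cosets in $\langle us^p\rangle\backslash G/H$ --- on $G/H\cong\Z_m$ the group $\langle us^p\rangle$ acts by translation, its orbits being the $d$ cosets of $\langle s^p\rangle$ --- the associated complete lift of $us^p$ to $H$ consists, with all lift-indices equal to $m'$, of the elements
\[ w_r:=s^{-r}(us^p)^{m'}s^r=u^{(-r)}\,u^{(p-r)}\cdots u^{((m'-1)p-r)},\qquad r=0,\dots,d-1. \]

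The key observation is that $w_r$ involves only those factors $L^{(c)}$ with $c\equiv-r\pmod d$, and that as $r$ ranges over $0,\dots,d-1$ these index sets partition $\{0,\dots,m-1\}$. Hence each $w_r$ lies in a single free-product factor $H_r\cong L^{*m'}$ of $H=H_0*\cdots*H_{d-1}$, and inside $H_r$ it is a product in which the image of $u$ under each of the $m'$ natural inclusions $L\hookrightarrow H_r$ appears exactly once. Since $u$ generates a measure free factor of $L$, so does $u^{(c)}$ in $L^{(c)}$, and Lemma \ref{freeprod} (in its many-factor form) gives that the subgroups $\langle u^{(c)}\rangle$ with $c\equiv-r\pmod d$ are common measure free factors of $H_r$; their join is therefore a measure free factor of $H_r$, freely generated by the $u^{(c)}$, and $w_r$ --- a product of this free basis using each generator once --- is a primitive element, so $\langle w_r\rangle$ is a free factor of that free group and hence also a measure free factor of $H_r$ (a free factor of a freely generated measure free factor is again one, as used in the proof of Theorem \ref{twoletters}). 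Applying Lemma \ref{freeprod} once more to $H=H_0*\cdots*H_{d-1}$, the subgroups $\langle w_0\rangle,\dots,\langle w_{d-1}\rangle$ are common measure free factors of $H$, so $K=\langle w_0,\dots,w_{d-1}\rangle$ is free of rank $d$ and a measure free factor of $H$. Since $G=L*\Z_m$ is treeable of finite cost and $\{w_0,\dots,w_{d-1}\}$ is a complete lift of $us^p$ with $K$ free of the required rank, Theorem \ref{lift} applies and shows that $\langle us^p\rangle$ is a measure free factor of $G$. This proves $(\star)$, and with it the theorem.

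The part I expect to take the most care is the bookkeeping for the complete lift in the second paragraph: verifying that $(us^p)^{m'}$ and its conjugates genuinely form a complete lift with all lift-indices equal to $m'$, and --- the real content --- that these lifts distribute among pairwise \emph{disjoint} free-product factors $H_r$ of $H$, one for each $\langle us^p\rangle$-orbit on $G/H$. It is exactly this disjointness that lets the free-product machinery (Lemma \ref{freeprod} together with the step from a free basis to a primitive element generating a free factor) be applied one factor at a time; once it is in place, the rest is an assembly of results already available.
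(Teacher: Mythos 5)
Your proposal is correct and follows essentially the same route as the paper: induct on the number of finite cyclic factors, pass to the index-$n_k$ normal subgroup $H$ which decomposes as the free product of the $n_k$ conjugates of the remaining factor, compute the complete lift of $w$ (with $d=\gcd(p_k,n_k)$ lifts, each of index $n_k/d$), observe that the lifts land in pairwise disjoint sub-free-products of $H$ where each is a primitive element of a free measure free factor built via Lemma \ref{freeprod}, and conclude with Theorem \ref{lift}. The only cosmetic difference is that you identify $H$ and its free-product structure via the Kurosh subgroup theorem and a retraction, where the paper uses an explicit branched covering of the presentation complex.
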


\begin{proof}
We use induction on $k$. The base case $k=0$ is trivial. For the inductive step, consider the subgroup $K$ generated by $a_1,\ldots,a_n,s_1,\ldots,s_{k-1}$, and the subgroup $H$ generated by the conjugates $K_j=s_k^jKs_k^{-j}$ for $j=0,\ldots,n_k-1$. The natural presentation of $K$ is analogous to the one of $G$, only with $k-1$ generators of torsion. On the other hand, we shall see that $H\cong K_0*\cdots * K_{n_k-1}$ and has index $n_k$ in $G$. 

To prove this we consider the complex $X$ corresponding to the given presentation of $G$. We use the orbifold notation: $X$ consists on a wedge of circles in correspondence with the generators $a_1,\ldots,a_n, s_1,\ldots,s_k$, where the circle for $s_j$ is capped by a disk $D_j$ with a cone-point of degree $n_j$. Then $H$ corresponds to the $n_k$-sheeted branched cover $\hat X \to X$ constructed as follows: Let $\hat D_k \to D_k$ be the branched covering given by the map $\D\to\D / z\to z^{n_k}$ (identifying $D_k$ and $\hat D_k$ with the unit disk $\D\subset \C$, with the cone-point at $0$). Let $x_0,\ldots,x_{n_k-1}$ be the preimages of the basepoint of $X$ in $\hat D_k$, and $X_0,\ldots,X_{n_k-1}$ be copies of the presentation complex of $K$. We get $\hat X$ by wedging each $X_j$ to $\hat D_k$ at $x_j$, and the covering map $\hat X \to X$ is the natural extension of the branched cover $\hat D_k \to D_k$.

Notice that the single preimage in $\hat X$ of the cone-point of $D_k$ has degree $1$, i.e. is no longer a cone-point. So $\pi_1(\hat X) \cong \pi_1(X_0) * \cdots * \pi_1(X_{n_k-1})$, giving that $H=\pi_(\hat X) \cong K_0*\cdots *K_{n_k-1}$ and has index $n_k$ in $G$.

\begin{figure}[htbp]
\centering
\includegraphics[width=0.75\textwidth]{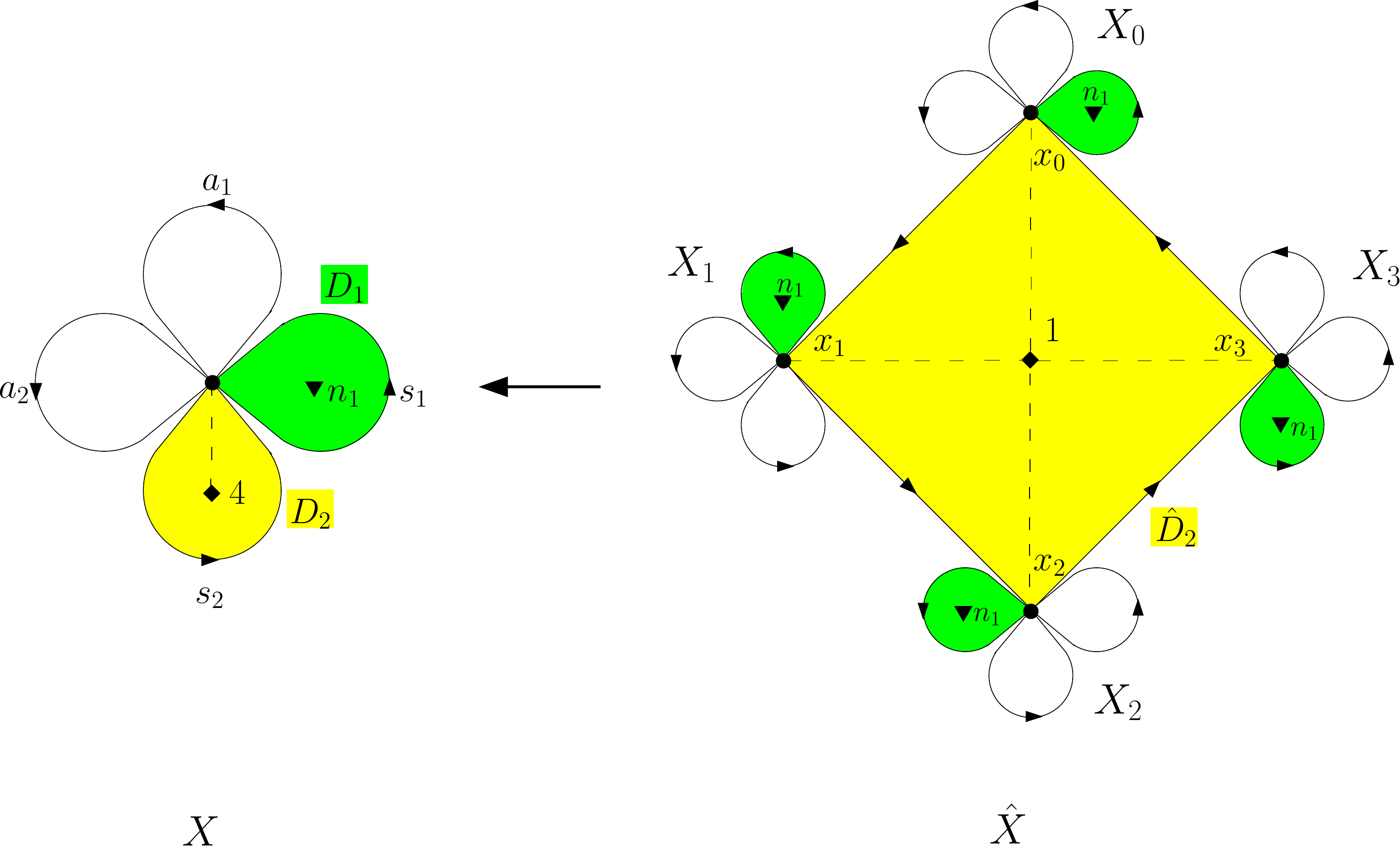}
\caption{{\bf Left:} The complex $X$ for $n=2$, $k=2$ and $n_2=4$. {\bf Right:} Sketch of the $4$-sheeted branched cover $\hat X$, and the subcomplexes $X_i$ used in the proof.}
\end{figure}

Now we will describe a complete lift for $w$. Let $u = vs_1^{p_1}\cdots s_{k-1}^{p_{k-1}} \in K$, and $u_j=s_k^jus_k^{-j} \in K_j$ for $j=0,\ldots,n_k-1$. Consider also $d=(p_k,n_k)$ and $m=n_k/d$. The lift of $w^m$ to $\hat X$ starting at $x_i$ is \[ w_i = u_{i} \cdots u_{i+(m-1)p_k} \] the product of the $u_{i+lp_k}$ for $l=0,\ldots,m-1$, in that order. Then $w_0,\ldots, w_{d-1}$ form a complete lift of $w$ to $H$.

For each $i=0,\ldots,d-1$ write $L_i = \ast_{l=0}^{m-1}K_{i+lp_k}$. By induction hypothesis, $u$ generates a measure free factor of $K$, and so does $u_j$ in $K_j$ by conjugation. Then the subgroup $M_i \leq L_i$, generated by $u_{i+lp_k}$ for $l=0,\ldots,m-1$, is a measure free factor of $L_i$, using Lemma \ref{freeprod}. Observe that $w_i$ is a free factor of $M_i$, and so it is a measure free factor of $L_i$ (this is an easy consequence of the definition of measure free factor). Finally, since $H = \ast_{i=0}^{d-1}L_i$, the subgroup generated by the complete lift $w_0,\ldots,w_{d-1}$ is free of rank $d$, and a measure free factor of $H$ (again by \ref{freeprod}). We finish by applying Theorem \ref{lift}.  

\end{proof}

This allows us to extend the results in section \ref{sec:surfaces} to the exact analogues for $2$--orbifold groups.

\begin{corollary} \label{orbifold} Let $S$ be a $2$--orbifold with boundary, and $\gamma_1,\ldots,\gamma_k$ be its boundary curves. Suppose that $\alpha = \{ \alpha_1,\ldots,\alpha_n \}$ is a family of disjoint essential simple closed curves on $S$, and $S_1,\ldots,S_t$ are the components of $S$ cut along $\alpha$. If $S_j$ has genus at least $1$ for every $j$, then the subgroup \[ \langle [\gamma_1],\ldots,[\gamma_k],[\alpha_1],\ldots,[\alpha_n]\rangle \leq \pi_1(S) \] is a measure free factor of $\pi_1(S)$. 
\end{corollary}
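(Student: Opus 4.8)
The plan is to transcribe the proof of Proposition~\ref{surface4}, with the role of Lemma~\ref{surface3} (on the boundary subgroup of a surface with boundary) played by its orbifold analogue, and that analogue is where Theorem~\ref{vfree} enters. Concretely, the first and main step is the base case: if $\Sigma$ is a compact $2$--orbifold with non-empty boundary whose underlying surface has genus at least $1$, then the boundary subgroup $\langle[\delta_1],\ldots,[\delta_r]\rangle\leq\pi_1(\Sigma)$ is a measure free factor of $\pi_1(\Sigma)$.

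To prove the base case, use the standard orbifold presentation: with the genus generators $a_i,b_i$ (resp.\ $a_i$ in the non-orientable case) freely generating a free group $P$, with $x_1,\ldots,x_\ell$ the loops around the cone points of orders $m_1,\ldots,m_\ell$, and with $\delta_1,\ldots,\delta_r$ the boundary loops, one has the single relation $v_0\,x_1\cdots x_\ell\,\delta_1\cdots\delta_r=1$, where $v_0=[a_1,b_1]\cdots[a_g,b_g]$ (orientable) or $v_0=a_1^2\cdots a_g^2$ (non-orientable). Eliminating $\delta_r$ gives $\pi_1(\Sigma)\cong\bigl(P*\Z_{m_1}*\cdots*\Z_{m_\ell}\bigr)*\langle\delta_1\rangle*\cdots*\langle\delta_{r-1}\rangle$, and the relation immediately yields $\delta_1\cdots\delta_r=(v_0\,x_1\cdots x_\ell)^{-1}$, so the boundary subgroup equals $\langle v,\delta_1,\ldots,\delta_{r-1}\rangle$ with $v:=v_0\,x_1\cdots x_\ell$. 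By Theorem~\ref{surface1} (orientable) or the one-boundary-component case of Lemma~\ref{surface3} (non-orientable), $v_0$ generates a measure free factor of $P$, so Theorem~\ref{vfree} shows that $v$ generates a measure free factor of $P*\Z_{m_1}*\cdots*\Z_{m_\ell}$. Since $\pi_1(\Sigma)$ is the free product of this group with the free group $\langle\delta_1,\ldots,\delta_{r-1}\rangle$, Lemma~\ref{freeprod} makes $\langle v\rangle$ and $\langle\delta_1,\ldots,\delta_{r-1}\rangle$ common measure free factors of $\pi_1(\Sigma)$, hence their join, which is precisely the boundary subgroup, is a measure free factor.

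With the base case in hand, the proof of Corollary~\ref{orbifold} is the proof of Proposition~\ref{surface4} verbatim. One forms the graph-of-groups decomposition of $\pi_1(S)$ obtained by cutting $S$ along the two-sided curves of $\alpha$, with (orbifold) vertex groups $\pi_1(S_j)$ and edge groups $\langle[\alpha_i]\rangle$; since every $S_j$ has genus at least $1$, the base case shows that the boundary subgroup of $\pi_1(S_j)$, freely generated by the $[\alpha_i^{\pm}]$ and the $[\gamma_p]$ lying in $S_j$, is a measure free factor of $\pi_1(S_j)$. As the $\pi_1(S_j)$ are virtually free, hence treeable, Proposition~\ref{HNN} applied along the iterated amalgamations and HNN extensions that reconstruct $\pi_1(S)$, each performing an identification $[\alpha_i]=[\alpha_i^-]$ and $[\alpha_i]=t_i[\alpha_i^+]t_i^{-1}$, gives that $\langle[\gamma_1],\ldots,[\gamma_k],[\alpha_1],\ldots,[\alpha_n]\rangle$ is a measure free factor of $\pi_1(S)$. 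One-sided curves of $\alpha$ (cores of M\"obius bands) are treated exactly as in Proposition~\ref{surface4}: cut along them first, observe that $[\alpha_i^2]$ is then a boundary curve, apply the previous case, and recover $\langle[\alpha_i]\rangle$ by an amalgamation along $\langle[\alpha_i^2]\rangle$ using Proposition~\ref{amalgam+}. A curve of $\alpha$ that bounds a cone disk merely splits off a $\Z_m$ free factor and may be discarded at the outset.

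The main obstacle is the base case, and within it the reduction that exhibits one boundary curve in the exact form $v_0\,x_1\cdots x_\ell$ demanded by Theorem~\ref{vfree}: one must verify that the genus generators, the cone-point loops, and the remaining boundary loops can be chosen simultaneously so that the claimed free-product decomposition of $\pi_1(\Sigma)$ holds and Lemma~\ref{freeprod} applies cleanly, and that this works uniformly in the orientable and non-orientable cases. Everything downstream is a faithful copy of the surface arguments of Section~\ref{sec:surfaces}.
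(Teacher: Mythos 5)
Your proposal is correct and follows essentially the same route as the paper: reduce, via the arguments of Lemma \ref{surface3} and Proposition \ref{surface4} (splitting off the extra boundary generators as free factors and running the graph-of-groups argument with Propositions \ref{HNN} and \ref{amalgam+}), to the case of one boundary curve and no $\alpha_i$, where the orbifold presentation exhibits $[\gamma_1]$ as $v_0 s_1\cdots s_m$ and Theorem \ref{vfree} applies with $v_0$ handled by Theorem \ref{surface1} or Lemma \ref{surface3}. The only difference is organizational (you prove the multi-boundary base case directly rather than reducing to $k=1$ first), which changes nothing of substance.
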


\begin{proof}

The arguments used in the proof of \ref{surface3} and \ref{surface4} allows us to reduce the corollary to the case with only one boundary component ($k=1$) and no curves $\alpha_i$ ($n=0$). Then we can write $\pi_1(S)$ as either 
\[ \pi_1(S) = \langle a_1,b_1,\ldots,a_g,b_g,s_1,\ldots,s_m | s_1^{n_1}=\cdots=s_m^{n_m} = 1\rangle \] where $[\gamma_1] = [a_1,b_1]\cdots[a_g,b_g]s_1\cdots s_m$, or
\[ \pi_1(S) = \langle a_1,\ldots,a_g,s_1,\ldots,s_m | s_1^{n_1}=\cdots=s_m^{n_m} = 1\rangle  \] where $[\gamma_1] = a_1^2\cdots a_g^2 s_1\cdots s_m$. In both cases we conclude using Proposition \ref{vfree}, together with Lemma \ref{surface3}.

\end{proof}

\end{document}